\numberwithin{equation}{section}
\newtheorem{thm}{Theorem}[section]
\newtheorem{prop}[thm]{Proposition}
\newtheorem{lem}[thm]{Lemma}
\newtheorem{remark}[thm]{Remark}
\newtheorem{cor}[thm]{Corollary}
\numberwithin{equation}{section}
\newcommand{\N}{\mathbb{N}}
\newcommand{\Q}{\mathbb{Q}}
\newcommand{\Z}{\mathbb{Z}}
\newcommand{\mbH}{\mathbb{H}}
\newcommand{\ord}{\textrm{ord}}
\newcommand{\Hom}{\mathrm{Hom}}
\newcommand{\mfa}{\mathfrak{a}}
\newcommand{\mfm}{\mathfrak{m}}
\newcommand{\mfn}{\mathfrak{n}}
\newcommand{\mfW}{\mathbf{W}}
\newcommand{\mH}{\mathrm{H}}
\newcommand{\SL}{\mathrm{SL}}
\newcommand{\GL}{\mathrm{GL}}
\newcommand{\ab}{\textrm{ab}}
\newcommand{\lra}{\longrightarrow}
\newcommand{\ra}{\rightarrow}
\newcommand{\ras}{\twoheadrightarrow}
\newcommand{\psmat}[4]{\bigl( \begin{smallmatrix} #1 & #2 \\ #3 & #4 \end{smallmatrix} \bigr)}
\begin{document}
\title{A $2$-adic control theorem for modular curves}
\author{Narasimha Kumar}
\email{narasimha.kumar@iith.ac.in}

\address{Department of Mathematics \\
Indian Institute of Technology Hyderabad \\
Ordnance Factory Estate \\
Yeddumailaram 502205 \\
Telangana, INDIA.}

\begin{abstract}
We study the behaviour of ordinary parts of the homology modules of modular curves,
associated to a decreasing sequence of congruence subgroups $\Gamma_1(N2^r)$ for $r \geq 2$, and prove
a control theorem for these homology modules.
\end{abstract}
\maketitle

\section{Introduction}
Hida theory studies the modular curves associated to the following congruence subgroups, for primes 
$p \geq 5$ and $(p , N) = 1$,
\begin{equation*}
 \dotsm \subset \Gamma_1(Np^r) \subset \dotsm \subset \Gamma_1(Np).\tag{$\ast$}
\end{equation*}
Let $Y_r$ denote the Riemann surface associated to the congruence subgroup $\Gamma_1(Np^r)$. One of 
the important results in Hida theory~\cite{Hid86} is that the projective limit of ordinary parts of the 
homology modules, i.e., $W^{\rm ord}:= \underset{r}{\varprojlim}\ \mH_1(Y_r, \Z_p)^{\rm ord}$, is a 
free $\Lambda$-module of finite rank and 
\begin{equation*}
 W^{\rm ord} / \mfa_r W^{\rm ord} =  \mH_1(Y_r, \Z_p)^{\rm ord}, 
\tag{$\ast\ast$} 
\end{equation*}
$\textrm{for all}\ r \geq 1$, where $\mfa_r$ denotes the augmentation ideal of $\Z[[1+p^r\Z_p]]$ and $\Lambda=\Z_p[[1+p\Z_p]]$.
% One is curious to know the behaviour of these modular curves for the primes $p=2, 3$, as it is uncertain 
% whether any new phenomena occurs or it has similar properties to that of modular curves for $p \geq 5$. 
In~\cite{Eme99}, Emerton gave a proof of these results above for primes $p \geq 5$, using algebraic topology of the 
Riemann surfaces  $Y_r$. 

Emerton's proof for $p \geq 5$ holds for $p = 3$ with $N > 1$ verbatim, 
but for $p = 2$ we show that similar results hold  only after
passing to smaller congruence subgroups. Moreover, there is no restriction on $N$, i.e., $N$ can be equal to $1$ (unlike when $p=3$)
(cf. Theorem~\ref{mainthmone} in the text).
As a consequence of these results, we proved control theorems for ordinary $2$-adic families of modular forms, see~\cite{GK12}.
Some amount of calculations will be omitted and the reader should refer to those in~\cite{Eme99} for more details.

%, and this forms the content of this note. 
% In this note, we prove that the behaviour of the modular curves, as in $(\ast)$, for $p = 2,3$ is quite similar to ($\ast\ast$) 
%  by following his approach.

\section{Preliminaries}
Throughout this note, let $p = 2$, $q = 4$, and $N \in \N$ such that $(p,N)=1$. We look at the modular curves associated 
to the following congruence subgroups
\begin{equation*}
  \dotsm \subset \Gamma_1(Np^r) \subset \dotsm \subset \Gamma_1(Nq).
\end{equation*}
If we take the homology with $\Z$-coefficients of the tower of modular curves, we get a tower 
of finitely generated free abelian groups
\begin{equation}
\label{chain}
 \dotsm \ra \Gamma_1(Np^r)^{\ab} \ra \dotsm \ra \Gamma_1(Nq)^{\ab},
\end{equation}
because for $r \geq 2$, $\mH_1(\Gamma_1(Np^r) \backslash \mbH, \Z)= \Gamma_1(Np^r)^{\ab}$, where $\mbH$ denotes the upper half-plane. To understand \eqref{chain}, we introduce the congruence subgroups 
for $r \geq 2$:
\begin{equation*}
 \Phi_r^2 = \Gamma_1(Nq)\cap\Gamma_0(p^r). 
\end{equation*}
Clearly, we have
$ \Gamma_1(Np^r) \subset \Phi_r^2 \subset \Gamma_1(Nq)$
and $\Gamma_1(Np^r)$ is a normal subgroup of $\Phi_r^2$.
For $r \geq 2$, we define $\Gamma_r := \text{Ker}\ (\Z_p^{\times} \ras (\Z_p/p^r\Z_p)^{\times})$, which is a subgroup 
of $\Gamma_2$ with index ${p}^{r-2}$. Set $\Gamma: =\Gamma_2$.
 
We define a morphism of groups
\begin{equation*}
 \Phi_r^2 \stackrel{\eta_r}{\lra} \Gamma / \Gamma_r
\end{equation*}
via the formula
\begin{equation}
 \label{dsurj}
  \quad \psmat{a}{b}{c}{d} \lra d \bmod {\Gamma_r}. 
\end{equation}

\begin{lem}
\label{etasur}
The map $\eta_r$ is surjective.
\end{lem}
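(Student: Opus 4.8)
The plan is to prove surjectivity by writing down, for each prescribed class in $\Gamma/\Gamma_r$, an explicit preimage matrix in $\Phi_r^2$. First I would make the target group concrete: since $p=2$ and $q=4$, we have $\Gamma = \Gamma_2 = 1 + 4\Z_2$ and $\Gamma_r = 1 + 2^r\Z_2$, so a class in $\Gamma/\Gamma_r$ is represented by an odd integer $d_0$ with $d_0 \equiv 1 \pmod 4$, determined only modulo $2^r$. Thus it suffices, given such a $d_0$, to produce a matrix $\psmat{a}{b}{c}{d} \in \Gamma_1(Nq)\cap\Gamma_0(p^r)$ with $d \equiv d_0 \pmod{2^r}$.

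Next comes the construction. Because $(p,N)=1$, the integer $N$ is odd, so $\gcd(2^r,N)=1$ and the Chinese Remainder Theorem produces an integer $d$ with $d \equiv d_0 \pmod{2^r}$ and $d \equiv 1 \pmod N$. Since $d_0 \equiv 1 \pmod 4$, this $d$ satisfies $d \equiv 1 \pmod 4$ and $d \equiv 1 \pmod N$, hence $d \equiv 1 \pmod{Nq}$; in particular $d$ is odd and coprime to $N$. I would then set $c = 2^r N$, which is divisible by both $Nq$ and $p^r$ (as $r \geq 2$), and observe that $\gcd(c,d)=1$, since $d$ is odd and prime to $N$. Choosing $a,b \in \Z$ with $ad - bc = 1$ gives a matrix $\gamma = \psmat{a}{b}{c}{d}\in \SL_2(\Z)$.

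Finally I would verify membership and compute the value of $\eta_r$. By construction $c \equiv 0 \pmod{Nq}$ and $c \equiv 0 \pmod{p^r}$, while $d \equiv 1 \pmod{Nq}$; the remaining congruence $a \equiv 1 \pmod{Nq}$ is then forced, because $ad \equiv 1 \pmod{c}$ yields $ad \equiv 1 \pmod{Nq}$, which combined with $d \equiv 1 \pmod{Nq}$ gives $a \equiv 1 \pmod{Nq}$. Hence $\gamma \in \Gamma_1(Nq)\cap\Gamma_0(p^r) = \Phi_r^2$, and $\eta_r(\gamma) = d \bmod \Gamma_r$ equals the prescribed class, so $\eta_r$ is surjective. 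The step requiring the most care is the compatibility of the congruences at the prime $2$: one must not over-constrain $a$ and $d$ to be $\equiv 1 \pmod{2^r}$ (the determinant relation only forces them to be mutually inverse modulo $c$, not modulo $2^r$), and it is precisely the oddness of $N$ that allows the $\Gamma_1(Nq)$-condition $d \equiv 1 \pmod 4$ to coexist with an arbitrarily prescribed residue $d_0 \pmod{2^r}$.
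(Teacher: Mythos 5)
Your proof is correct and takes essentially the same approach as the paper: lift the given class to an integer $d \equiv 1 \pmod{Nq}$ with the prescribed residue modulo $2^r$, take $c = Np^r$, and complete to a matrix in $\SL_2(\Z)$ of determinant one. The only difference is expository: you spell out the Chinese Remainder Theorem step behind the lift and the verification that $a \equiv 1 \pmod{Nq}$ is forced by $ad - bc = 1$, both of which the paper leaves implicit.
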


\begin{proof}
Given a $\bar{d} \in \Gamma/\Gamma_r$, we can take a lift $d$ of $\bar{d}$ of the form $1+kqN$ for some $k \in \Z$,  because for any $\alpha, \beta \in \Gamma, \alpha \equiv \beta\  \pmod {\Gamma_r}$ if and only if 
$\alpha-\beta \in p^r \Z_p $. Now, take $c$ to  be $Np^r$. Clearly $(c,d)=1$, and hence 
there exists $a,b \in \Z$ such that $ad-bc=1$. We see that $\alpha = \psmat{a}{b}{c}{d} \in \Phi_r^2$ and
$\eta_r(\alpha) = \bar{d}$.
\end{proof}

\begin{remark}
\label{isosubgroup}
The restriction of $\eta_r$ to $\Phi_r^2 \cap \Gamma^0(p)$, which we denote by  Res($\eta_r$),
is also surjective onto $\Gamma / \Gamma_r$. Moreover, we have the following commutative diagram
\begin{equation*}
\xymatrix@1{
\Phi_{r+1}^2 \ar@{->>}[r]^{\eta_{r+1} \quad} \ar[d]^{t^{-1}-t}_{\wr} & \Gamma/\Gamma_{r+1} \ar@{->>}[d]\\
\Phi_r^2\cap \Gamma^0(p) \ar@{->>}[r]^{\quad \text{Res}(\eta_r)} & \Gamma/\Gamma_r,
}
\end{equation*}  
where the group $\Gamma^0(p) = \{ \psmat{a}{b}{c}{d} \in \SL_2 (\Z) \ \lvert\  \> b \equiv 0 \pmod 2 \}$ 
and $t = \psmat{1}{0}{0}{p}$.
\end{remark}

By Lemma~\ref{etasur}, we have the following short exact sequence of groups
\begin{equation*}
1 \ra \Gamma_1(Np^r) \ra \Phi_r^2 \stackrel{\eta_r}{\ra} \Gamma /\Gamma_r \ra 1.
\end{equation*}
The action of $\Phi_r^2$ on $\Gamma_1(Np^r)$ by conjugation induces an action of  
$\Phi_r^2/\Gamma_1(Np^r) = $ $ \Gamma/\Gamma_r$ on $\Gamma_1(Np^r)^{\ab}$. 
Thus $\Gamma$ acts naturally  on $\Gamma_1(Np^r)^{\ab}$.
The morphisms in the chain
\begin{equation*}
\dotsm \ra \Gamma_1(Np^r)^{\ab} \ra \dotsm \ra \Gamma_1(Nq)^{\ab}
\end{equation*}
are clearly $\Gamma$-equivariant.

If $r \geq s > 1$, we denote by $\Phi_r^s$ the subgroup of $\Phi_r^2$ containing $\Gamma_1(Np^r)$ 
whose quotient by $\Gamma_1(Np^r)$ equals $\Gamma_s/\Gamma_r$, i.e., $\Phi_r^s := \Gamma_1(Np^s) \cap \Gamma_0(p^r)$.
%and we refer to them also as congruence subgroups.
Moreover, we have
\begin{equation*}
  \Gamma_1(Np^r)^{\ab} \ra \Phi_r^{s \> \ab} \ra \Gamma_s/\Gamma_r \ra 1.
\end{equation*}

For any $s >1$, let $\gamma_s$ denote a topological generator of $\Gamma_s$. Then the augmentation ideal $\mfa_s$ of $\Lambda = \Z_p[[\Gamma]]$
is a principal ideal generated by $\gamma_s-1$. Similarly, for $i > 0$, $\Gamma_{s+i} =\overline{\langle \gamma_s^{p^i} \rangle}$ and 
$\mfa_{s+i} = (\gamma_s^{p^i}-1)$. Clearly, for any $r \geq s > 1$, the augmentation ideal of $\Z[\Gamma_s/\Gamma_r]$ is $\mfa_s$,
and 
\begin{equation*}
  \mfa_s\Gamma_1(Np^r)^{\ab} = [\Phi_r^s,\Gamma_1(Np^r)]/[\Gamma_1(Np^r),\Gamma_1(Np^r)] 
                               \subset \Gamma_1(Np^r)^{\ab},
\end{equation*}
and the last inclusion follows since $\Gamma_1(Np^r)$ is a normal subgroup of $\Phi_r^s$.
The extension
\begin{equation*}
  1 \ra \Gamma_1(Np^r)/[\Phi_r^s,\Gamma_1(Np^r)] \ra \Phi_r^s/[\Phi_r^s,\Gamma_1(Np^r)] 
                                                 \ra \Gamma_s/\Gamma_r \ra 1
\end{equation*}
is a central extension of a cyclic group, thus the middle group is abelian, implying that
\begin{equation*}
[\Phi_r^s,\Phi_r^s]=[\Phi_r^s,\Gamma_1(Np^r)].
\end{equation*}
The equality holds because of $\Phi_r^s \supseteq \Gamma_1(Np^r)$ and the fact 
that the commutator subgroup of the group $\Phi_r^s/[\Phi_r^s,\Gamma_1(Np^r)]$ is trivial.

\begin{remark}
\label{isosubgroup-1}
The following diagram is commutative
\begin{equation*}
\xymatrix{
  \frac{\Phi_r^s\cap \Gamma^0(p)}{\Gamma_1(Np^r)\cap \Gamma^0(p)} \ar[r]^{\quad i} 
  \ar[dr]^{\sim} & \frac{\Phi_r^s}{\Gamma_1(Np^r)} \ar[d]^{\sim} \\
  & \frac{\Gamma_s}{\Gamma_r}.
}
\end{equation*}
The diagonal map is an isomorphism, by Remark~\ref{isosubgroup}.
% We have an isomorphism $\frac{\Phi_r^s\cap \Gamma^0(p)}{\Gamma_1(Np^r)\cap \Gamma^0(p)} 
% \stackrel{\sim}{\lra} \frac{\Gamma_s}{\Gamma_r}$, by remark~\ref{isosubgroup}. 
Since $\Gamma_s/\Gamma_r$ is finite, we see that the inclusion $i$ is an isomorphism.
This remark is useful in proving Lemma~\ref{VUcomm}.
\end{remark}

To prove Theorems~\ref{Hida} and~\ref{mainthmone}, we need to understand the images of 
these morphisms
\begin{equation*}
  \Gamma_1(Np^r)^{\ab} \ra \Gamma_1(Np^s)^{\ab}
\end{equation*}
in the chain of homology groups as in~\eqref{chain}. Unfortunately, we do not have a good characterization these images for $r \geq s > 1$ in general, and so we cannot get a good description of the projective limit. 
This morphism can be factored as 
\begin{equation*}
  \Gamma_1(Np^r)^{\ab} \ras \Gamma_1(Np^r)^{\ab}/\mfa_{s} 
                            \hookrightarrow \Phi^{s \> \ab}_r 
\lra \Gamma_1(Np^s)^{\ab},
\end{equation*}
and the problem is that the second and third morphisms  may not be isomorphisms, in general.

Hida observed that if one applies a certain projection operator arising from the Atkin $U$-operator 
to all these modules then they become isomorphisms, in which case we have a good control over the images
of the morphisms in~\eqref{chain}. So we now define the Atkin $U$-operator and study their properties.
%by Theorem \ref{Hida}, we can understand the ordinary part of $\mfW$ very well. 

\section{Hecke operators}
Suppose $G,H$ are two subgroups of a group $T$, and $t \in T$ such that $[G:t^{-1}Ht \cap G]<\infty$. 
Then one has 
\begin{equation*}
  G^{\ab}\overset{V}{\lra} (t^{-1}Ht\cap G)^{\ab} \overset{\sim}{\lra} (H \cap tGt^{-1})^{\ab}  \lra H^{\ab},
\end{equation*}
where $V$ is the transfer map, the isomorphism is given by conjugating with $t$,
and the last morphism is induced by $H \cap tGt^{-1} \hookrightarrow H$.
Taking the composition of all these we obtain a morphism
\begin{equation*}
  [t]: G^{\ab} \ra H^{\ab},
\end{equation*}
the ``Hecke operator'' corresponding to $t$.

In our case, take $T = \GL_2(\Q)$, $G = H =$ a congruence subgroup of $\SL_2$ of level divisible by $p$, and
$t = \psmat{1}{0}{0}{p}$. We denote the corresponding Hecke operator by $U_2$.
For $A = \psmat{a}{b}{c}{d} \in \Phi_r^s$, we see that
\begin{equation*}
t^{-1} A t = \psmat{a}{bp}{c/p}{d} \quad \text{and} \quad 
t A t^{-1} = \psmat{a}{b/p}{cp}{d}. 
\end{equation*}

\begin{remark}
 \label{dconju}
Observe that $(1,1)$, $(2,2)$-entries of $A$ and of $ t ^{\pm 1} A t^{\mp 1}$ are the 
same.
\end{remark}

It is easy to see that 
$t^{-1} \Phi_r^s t \cap \Phi_r^s = \Phi_r^s \cap \Gamma^0(p), \Phi_r^s 
       \cap t \Phi_r^s t^{-1} = \Phi ^s _{r+1}, $
where the group $\Gamma^0(p)$ is as in Remark~\ref{isosubgroup}.
Thus, the Atkin $U$-operator (resp. $U'$-operator) is by definition the composition
\begin{equation}
\label{local-equation}
\xymatrix{
\Phi_r^{s \> \ab} \ar^{V \qquad}[r] & (\Phi_r^s \cap \Gamma^0(p))^{\ab} 
                  \ar[r]_{\ \ \quad \backsim}^{\quad\quad\  t-t^{-1}} & 
                  \Phi_{r+1}^{s \> \ab} \ar[r] & \Phi_r^{s \> \ab},
}
\end{equation}
(resp., the composition of the first two of above morphisms). 
%Observe that, the final morphism is just that induced by the inclusion of groups $\Phi_{r+1}^s \subset \Phi_r^s$.
%\begin{equation}
%%U' : \Phi_r^{s \> \ab} \ar^{V \qquad}[r] & (\Phi_r^s \cap \Gamma^0(p))^{\ab} \ar[r]_{\quad \backsim}^{\qquad t-t^{-1}} & \Phi_{r+1}^{s \> \ab}.
%}
%\end{equation}

\begin{lem}
\label{Ucomm}
Suppose that $r \geq s > 1, r' \geq s' > 1, r \geq r', s \geq s'$, so that $\Phi^s_r 
              \subset \Phi^{s'}_{r'}$. 
Then the following diagram commutes
\begin{equation*}
\xymatrix{
\Phi_r^{s \> \emph{ab}} \ar[r] \ar[d]^{U'} & \Phi_{r'}^{s' \> \emph{ab}} \ar[d]^{U'} \\
\Phi_{r+1}^{s \> \emph{ab}}  \ar[r] & \Phi_{r'+1}^{s' \> \emph{ab}}.
}
\end{equation*}
%(The horizontal morphisms are those induced by inclusion.)
Thus, the Atkin $U$-operator commutes with the morphism $\Phi_r^{s \> \emph{ab}} \ra \Phi_{r'}^{s' \> \emph{ab}}$.
% Consequently the following diagram also commutes\ (by def of $U$ and $U'$):
% \begin{equation*}
% \xymatrix{
% \Phi_r^{s \> \emph{ab}}  \ar[r]\ar[d]^{U} & \Phi_{r'}^{s' \> \emph{ab}} \ar[d]^{U} \\
% \Phi_{r}^{s \> \emph{ab}}  \ar[r]& \Phi_{r'}^{s' \> \emph{ab}}.
%} \end{equation*}
\end{lem}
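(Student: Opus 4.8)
The plan is to factor each $U'$ as the composite of the transfer $V$ and the conjugation isomorphism $\mathrm{conj}_t\colon x\mapsto txt^{-1}$, exactly as in~\eqref{local-equation}, and to check separately that each of these two maps commutes with the inclusion-induced horizontal maps $\Phi_r^{s\,\ab}\ra\Phi_{r'}^{s'\,\ab}$. Stacking the resulting two commutative squares then yields the diagram of the lemma for $U'$. To upgrade from $U'$ to the full Atkin operator $U$, I would append the final inclusion-induced map $\Phi_{r+1}^{s\,\ab}\ra\Phi_r^{s\,\ab}$ together with its counterpart for $(r',s')$; this bottom square commutes by functoriality of $(-)^{\ab}$ applied to the group inclusions $\Phi_{r+1}^s\subset\Phi_{r'+1}^{s'}$ and $\Phi_r^s\subset\Phi_{r'}^{s'}$, so the enlarged diagram still commutes.

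The key step is the compatibility of the transfer with the horizontal maps. I would use the general fact that if $G'\subseteq G$ are groups, $K\subseteq G$ has finite index and $K':=K\cap G'$, then the transfers fit into a commutative square with the inclusion-induced maps $G'^{\ab}\ra G^{\ab}$ and $K'^{\ab}\ra K^{\ab}$ \emph{provided} $G=KG'$, equivalently $[G:K]=[G':K']$. Indeed, in that case one may choose representatives for the cosets $K\backslash G$ that lie in $G'$, and these simultaneously represent $K'\backslash G'$, so the products of local factors computing the two transfers literally agree. I would apply this with $G=\Phi_{r'}^{s'}$, $G'=\Phi_r^s$, $K=\Phi_{r'}^{s'}\cap\Gamma^0(p)$ and $K'=\Phi_r^s\cap\Gamma^0(p)$. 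The hypothesis to verify is the index identity, and this is where the only genuine computation lies: the assignment $\psmat{a}{b}{c}{d}\mapsto b \bmod p$ is a surjective homomorphism from any $\Phi_r^s$ onto $\Z/p\Z$ with kernel $\Phi_r^s\cap\Gamma^0(p)$. It is a homomorphism because $a\equiv d\equiv 1 \pmod p$ (as $s>1$) forces the $(1,2)$-entry of a product to be $\equiv b+b' \pmod p$; hence $[\Phi_r^s:\Phi_r^s\cap\Gamma^0(p)]=p$ for every admissible pair $(r,s)$. Therefore $[G:K]=[G':K']=p$, and the transfer square commutes.

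The conjugation square is then immediate: $\mathrm{conj}_t$ is the single rule $x\mapsto txt^{-1}$ applied to every group in sight, and it carries $\Phi_r^s\cap\Gamma^0(p)$ into $\Phi_{r+1}^s$ and $\Phi_{r'}^{s'}\cap\Gamma^0(p)$ into $\Phi_{r'+1}^{s'}$, with $\Phi_{r+1}^s\subset\Phi_{r'+1}^{s'}$ since $r+1\geq r'+1$ and $s\geq s'$; thus both routes around the square send the class of $x$ to the class of $txt^{-1}$. Combining the transfer square with the conjugation square gives the commutativity of the $U'$-diagram, and the extension to $U$ is as explained in the first paragraph. The main obstacle is really just the transfer step: one must be certain that the two transfers are computed with the \emph{same} coset representatives, which is precisely what the index identity $[\Phi_r^s:\Phi_r^s\cap\Gamma^0(p)]=p$ guarantees; everything else is formal.
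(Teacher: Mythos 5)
Your proof is correct and takes essentially the same route as the paper's: the paper defers the $U'$-square to Emerton's Lemma~3.1 (whose argument likewise decomposes $U'$ into transfer plus conjugation by $t$ and rests on the matrices $\psmat{1}{i}{0}{1}$, $0 \le i \le p-1$, serving as common coset representatives, which is exactly the content of your index computation $[\Phi_r^s:\Phi_r^s\cap\Gamma^0(p)]=p$), and then passes from $U'$ to $U$ by appending the inclusion-induced square, just as you do. In effect you have supplied in full the details the paper omits by citation.
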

\begin{proof}
The proof is similar to the proof of~\cite[Lem. 3.1]{Eme99}. 
The final statement follows from~\eqref{local-equation}, since 
the Atkin $U$-operator, by definition, is the composition of $U'$-operator and 
the morphism induced by the inclusion of groups $\Phi_{r+1}^s \subset \Phi_r^s$.

%We can factorize the above diagram as a composition of two diagrams
%\begin{equation*}
%\xymatrix{
  %\Phi_r^{s \> \ab}  \ar[r]\ar[d]^{V} & \Phi_{r'}^{s' \> \ab} \ar[d]^{V} \\
  %(\Phi_r^{s \> \ab}\cap \Gamma^0(p))^{\ab} \ar[r] \ar[d]_{t-t^{-1}} & 
  %(\Phi_{r'}^{s' \> \ab}\cap \Gamma^0(p))^{\ab} \ar[d]_{t-t^{-1}} \\
  %\Phi_{r+1}^{s \> \ab}  \ar[r] & \Phi_{r'+1}^{s' \> \ab}
%}
%\end{equation*}
%The lower portion of this diagram clearly commutes. The following calculations shows that 
%the upper portion of the diagram commutes, since $\Phi^s_r \cap \Gamma^0(p)$
%has index $p$ in $\Phi^s_r$, we can take as the coset representatives the $p$ matrices
%$\psmat{1}{i}{0}{1}$ ($0 \leq i \leq p-1$). 
%Since the coset representatives are independent of the particular values of $r$ and $s$, 
%the transfer
%\begin{equation*}
%\xymatrix{
%\Phi_r^{s \> \ab} \ar[r]^{V \qquad} & (\Phi_r^{s}\cap \Gamma^0(p))^{\ab}
%}
%\end{equation*}
%is given by a formula independent of the values of $r$ and $s$, so the upper portion 
%of the diagram commutes. 
\end{proof}

\begin{cor}
\label{coker}
For $r \geq s >1$, each $\Phi^{s \> \ab}_r$ is a $\Z[U]$-module via the action of $U$
and morphisms between these modules (arising from the inclusions)  are morphisms of $\Z[U]$-modules. 
Hence, the cokernels of these morphisms acquire a $\Z[U]$-module structure.
\end{cor}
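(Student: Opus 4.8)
The plan is to deduce everything formally from Lemma~\ref{Ucomm} together with the single observation that the Atkin $U$-operator is an \emph{endomorphism} of each module $\Phi_r^{s \> \ab}$. Reading off the definition~\eqref{local-equation}, the transfer $V$, the conjugation isomorphism $t-t^{-1}$, and the map induced by the inclusion $\Phi_{r+1}^s \subset \Phi_r^s$ compose to carry $\Phi_r^{s \> \ab}$ back to itself. Thus $U \in \mathrm{End}_\Z(\Phi_r^{s \> \ab})$, and since giving an action of the polynomial ring $\Z[U]$ on an abelian group is precisely the same data as giving a single endomorphism of that group, this endows each $\Phi_r^{s \> \ab}$ with a canonical $\Z[U]$-module structure in which $U$ acts by the Atkin operator.

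Next I would invoke the final statement of Lemma~\ref{Ucomm}: for indices $r \geq s > 1$ and $r' \geq s' > 1$ with $r \geq r'$, $s \geq s'$, the inclusion $\Phi_r^s \subset \Phi_{r'}^{s'}$ induces a morphism $\Phi_r^{s \> \ab} \ra \Phi_{r'}^{s' \> \ab}$ that commutes with $U$. Commuting with $U$ is exactly the condition of being $\Z[U]$-linear: the map is already a homomorphism of abelian groups, hence $\Z$-linear, and $\Z[U]$-linearity on top of $\Z$-linearity reduces to compatibility with the single operator $U$. Hence each inclusion-induced morphism is a morphism of $\Z[U]$-modules.

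Finally, the statement about cokernels is the standard categorical fact that, for any morphism $f \colon M \ra N$ of $R$-modules, the image $f(M)$ is an $R$-submodule of $N$ and the quotient $N/f(M)$ inherits an $R$-module structure. Applying this with $R = \Z[U]$ and $f$ one of the inclusion-induced maps above shows that each cokernel $\mathrm{coker}(f)$ acquires a $\Z[U]$-module structure, as claimed.

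I do not expect a genuine obstacle, since the corollary is a formal consequence of Lemma~\ref{Ucomm}. The only point requiring care is the first one: verifying that the target of~\eqref{local-equation} really is $\Phi_r^{s \> \ab}$ again, so that $U$ is an honest endomorphism rather than merely a map between two distinct modules. This is guaranteed by the final map in~\eqref{local-equation} induced by $\Phi_{r+1}^s \hookrightarrow \Phi_r^s$, which is exactly the feature distinguishing the operator $U$ from $U'$.
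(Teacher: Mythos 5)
Your proposal is correct and matches the paper's treatment: the corollary is stated without separate proof precisely because it is the formal consequence of Lemma~\ref{Ucomm} that you spell out (the $U$-operator as an endomorphism gives the $\Z[U]$-structure, commutation with inclusion-induced maps gives $\Z[U]$-linearity, and cokernels of module morphisms inherit the module structure). Your added care about the target of~\eqref{local-equation} being $\Phi_r^{s \> \ab}$ itself is exactly the right point to check, and it holds for the reason you give.
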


%A particular case of Lemma~\ref{Ucomm} is the case $r' = r - 1, s = s' \leq r'$. 
Suppose $\pi$ denote the morphism
$ \pi : \Phi_r^{s \> \ab} \lra \Phi_{r-1}^{s \> \ab}$
and $\pi'$ for the morphism
$ \pi' : \Phi_{r+1}^{s \> \ab} \lra \Phi_{r}^{s \> \ab}$.
Then, by Lemma~\ref{Ucomm}, we have
\begin{equation}
 \label{UPiComm-1}
   U'\circ \pi =\pi' \circ U' = U \in \text{End}_{\Z}(\Phi^{s \> \ab}_r).
\end{equation}
By the definition of $U'$, we see that $\pi \circ U' = U \in \text{End}_{\Z}(\Phi^{s \> \ab}_{r-1})$.

 By Corollary~\ref{coker}, the cokernel of the morphism 
$\Gamma_1(Np^r)^{\ab}$  $\ra \Phi_{r}^{s \> \ab}$, for $r \geq s > 1$, is a $\Z[U]$-module
and this cokernel is isomorphic to the group $\Gamma_s/\Gamma_r$. Hence, the group 
$\Gamma_s/\Gamma_r$ is a $\Z[U]$-module. Observe that $\Phi_r^r = \Gamma_1(Np^r)$.

\begin{lem}
\label{lemmultiq}
The operator $U$ acts on $\Gamma_s/\Gamma_r$ as multiplication by $p$.
\end{lem}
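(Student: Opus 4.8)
The plan is to exploit the identification of $\Gamma_s/\Gamma_r$ with the cokernel of $\Gamma_1(Np^r)^{\ab}\to\Phi_r^{s\,\ab}$, whose quotient map is the abelianization of $\eta_r\colon\psmat{a}{b}{c}{d}\mapsto d\bmod\Gamma_r$. Since $U$ commutes with the inclusion-induced morphisms (Lemma~\ref{Ucomm} and Corollary~\ref{coker}), it descends to an endomorphism $\bar U$ of this cokernel, so it suffices to prove the identity $\eta_r\circ U=p\cdot\eta_r$ of maps $\Phi_r^{s\,\ab}\to\Gamma_s/\Gamma_r$; this forces $\bar U$ to be multiplication by $p$ because $\eta_r$ is surjective.

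To establish that identity I would factor $U$ as in~\eqref{local-equation}, namely $U=j\circ c_t\circ V$, where $V$ is the transfer to $K:=\Phi_r^s\cap\Gamma^0(p)$, the map $c_t$ is the conjugation isomorphism onto $\Phi_{r+1}^{s\,\ab}$, and $j$ is induced by $\Phi_{r+1}^s\hookrightarrow\Phi_r^s$, and then compute $\eta_r\circ U$ by composing from the right. First, $\eta_r\circ j$ is simply the restriction of the $(2,2)$-entry map to $\Phi_{r+1}^s$; precomposing with $c_t$ and invoking Remark~\ref{dconju}, which says conjugation by $t^{\pm1}$ fixes the $(2,2)$-entry, shows that $\eta_r\circ j\circ c_t=\eta_r|_K=\mathrm{Res}(\eta_r)$. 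Thus the conjugation step is invisible on $\Gamma_s/\Gamma_r$ and everything reduces to understanding the transfer.

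The crux is therefore the transfer step, for which I would use the standard character--transfer formula: if a homomorphism $\chi$ from a finite-index subgroup $K\le G$ into an abelian group is the restriction of a homomorphism $\tilde\chi$ on $G$, then $\chi\circ V=[G:K]\cdot\tilde\chi$, which follows from the coset-representative description of $V$ once one checks that the representative terms telescope away. Applying this with $\tilde\chi=\eta_r$ and $\chi=\eta_r|_K$ gives $\mathrm{Res}(\eta_r)\circ V=[\Phi_r^s:K]\cdot\eta_r$, and a direct count---the map $\psmat{a}{b}{c}{d}\mapsto b\bmod p$ is a surjective homomorphism onto $\Z/p\Z$ (it is multiplicative because $a\equiv d\equiv 1\bmod p$ on $\Phi_r^s$) with kernel exactly $K$---yields $[\Phi_r^s:\Phi_r^s\cap\Gamma^0(p)]=p$. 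Combining the two steps gives $\eta_r\circ U=p\,\eta_r$, as desired. The main obstacle is precisely this transfer computation: correctly setting up the coset-representative formula so that the cancellation is transparent, and confirming that the index equals $p$; by contrast the conjugation step contributes nothing, thanks to Remark~\ref{dconju}.
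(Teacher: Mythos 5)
Your proposal is correct and follows essentially the same route as the paper: identify $\Gamma_s/\Gamma_r$ with the cokernel of $\Gamma_1(Np^r)^{\ab}\to\Phi_r^{s\,\ab}$, factor $U$ as transfer--conjugation--inclusion per~\eqref{local-equation}, use Remark~\ref{dconju} to see the conjugation step is invisible on the $(2,2)$-entry, and extract the factor $p$ from the transfer step. The only difference is packaging: the paper exhibits the explicit coset representatives $\psmat{1}{i}{0}{1}$, $0\le i\le p-1$, to see directly that the transfer induces $\bar{A}\mapsto\bar{A}^p$ on the quotient, whereas you invoke the abstract character--transfer formula $\chi\circ V=[G:K]\cdot\tilde\chi$ together with the index computation $[\Phi_r^s:\Phi_r^s\cap\Gamma^0(p)]=p$ --- which is the same computation in abstract form.
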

\begin{proof}
The operator $U$ acts on $\Gamma_s/\Gamma_r$ as a multiplication by $p$ if and only if it acts on 
$\frac{\Phi_{r}^{s \> \ab}}{\Gamma_1(Np^r)^{\ab}}$ as $\bar{A} \mapsto \bar{A}^p$. The operator $U$ is the composition 
of the following morphisms:
\begin{equation}
\label{multiq}
 \begin{matrix}
  \quad & \frac{\Phi_{r}^{s \> \ab}}{\Gamma_1(Np^r)^{\ab}} \stackrel{V}{\lra} & \frac{(\Phi_r^s \cap
  \Gamma^0(p))^{\ab}}{(\Gamma_1(Np^r)\cap \Gamma^0(p))^{\ab}} \stackrel{\quad t-t^{-1}}{\lra} &  
   \frac{\Phi_{r+1}^{s\ \ab}}{\Phi_{r+1}^{r\ \ab}} \quad \lra & \frac{\Phi_{r}^{s \> \ab}}{\Gamma_1(Np^r)^{\ab}}\\
 &&&&\\
   \quad & \quad \bar{A} \qquad \longmapsto & \qquad \quad \bar{A}^p \quad \qquad \longmapsto & \quad t\bar{A}^pt^{-1} \quad 
   \longmapsto & t\bar{A}^pt^{-1}.
\end{matrix}
\end{equation}
Let $\{ \alpha_i = \psmat{1}{i}{0}{1} \}_{i=0}^{p-1}$ 
be the coset representatives of the group $\Phi_r^s \cap \Gamma^0(p)$ in $\Phi_r^s$. If we use these 
representatives to define the map in~\eqref{multiq}, then the transfer map looks like 
$\bar{A} \mapsto \bar{A}^p$. By Remark~\ref{dconju}, $tA^pt^{-1}$ and $A^p$ represent the 
same coset mod $\Gamma_1(Np^r)^{\ab}$ and hence we are done.
\end{proof}

We would like to define an action of $\Gamma$ on $\Phi^{s \> \ab}_r$ and call it the nebentypus action. 
This can be done as follows: For $r \geq 2$, if $\bar{d} \in \Gamma/\Gamma_r$, then choose 
an element $\alpha = \psmat{a}{b}{c}{d}$ of $\SL_2(\Z)$ such that $p^{r+1} \mid c$ and $p \mid b$, i.e., 
$\alpha \in \Phi^2_{r+1} \cap \Gamma^{0}(p)$. Such an $\alpha$ exists, because
\begin{equation*}
  \Phi_{r+1}^2\cap \Gamma^0(p)\twoheadrightarrow \Gamma / \Gamma_{r+1} \twoheadrightarrow \Gamma / \Gamma_{r}.
\end{equation*}
The nebentypus action of $d$  on $\Phi_r^{s \> \ab}$ 
is given by conjugation by $\alpha$. This action is well-defined because if $\alpha_1$ and $\alpha_2$ denote 
two lifts of $\bar{d}$, then $\alpha_1^{-1}\alpha_2 \in \Gamma_1(Np^{r+1})\cap\Gamma^0(p) \subseteq \Phi_r^s$
and hence for any element $x \in \Phi_r^s$, $\alpha_1^{-1} \alpha_2 x \alpha_2^{-1} \alpha_1 =
x $ in $\Phi_r^{s\> \ab}$. Now we shall show that the actions of $U$ and $\Gamma$ commutes. 

\begin{lem}
\label{UGaComm}
If $r \geq s > 1$, the actions of $U$ and $\Gamma$ commutes on $\Phi_r^{s \> \emph{\ab}}$.
\end{lem}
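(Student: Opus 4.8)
The plan is to exploit the factorization of the Atkin operator in~\eqref{local-equation}. Writing $U = \iota \circ U'$, where $U' \colon \Phi_r^{s\,\ab}\to \Phi_{r+1}^{s\,\ab}$ is the composition of the transfer $V$ with the conjugation $c_t\colon x\mapsto txt^{-1}$, and $\iota\colon\Phi_{r+1}^{s\,\ab}\to\Phi_r^{s\,\ab}$ is induced by the inclusion $\Phi_{r+1}^s\subset\Phi_r^s$, it suffices to show that each of $U'$ and $\iota$ intertwines the nebentypus actions of $\Gamma$ at the two levels involved. Since the nebentypus action of $\bar d$ is conjugation by a lift $\alpha\in\Phi_{r+1}^2\cap\Gamma^0(p)$, the essential task is to track a single such $\alpha$, together with its $t$-conjugate, through the three maps of~\eqref{local-equation}.

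First I would record that a nebentypus lift normalizes the relevant congruence subgroups. For $\alpha\in\Phi_{r+1}^2\cap\Gamma^0(p)\subset\Phi_r^2$ and $g\in\Phi_r^s$, the map $\eta_r$ into the abelian group $\Gamma/\Gamma_r$ gives $\eta_r(\alpha g\alpha^{-1})=\eta_r(g)$, so $\alpha\Phi_r^s\alpha^{-1}=\Phi_r^s$ (and likewise for $\Phi_{r+1}^s$); a reduction mod $p$ then shows that $\alpha$ also preserves the intersection with $\Gamma^0(p)$. Using the independence of the action from the chosen lift, I would moreover take $\alpha=\psmat{a}{b}{c}{d}$ with $p^2\mid b$ and $p^{r+1}\mid c$. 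Granting this, $\iota$ is immediately $\Gamma$-equivariant: a lift in $\Phi_{r+2}^2\cap\Gamma^0(p)$ for the level-$(r+1)$ action also lies in $\Phi_{r+1}^2\cap\Gamma^0(p)$ and hence serves as a lift for the level-$r$ action, and conjugation by it commutes with the inclusion-induced map by functoriality of abelianization.

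For $U'$ I would treat its two constituents separately. The transfer $V$ commutes with conjugation by $\alpha$ by the standard naturality of the transfer under an automorphism preserving both $\Phi_r^s$ and the finite-index subgroup $\Phi_r^s\cap\Gamma^0(p)$, using the coset representatives $\alpha_i=\psmat{1}{i}{0}{1}$ of Lemma~\ref{lemmultiq}, which $\alpha$ permutes compatibly. For $c_t$, conjugation by $t$ carries $\alpha$-conjugation to conjugation by $t\alpha t^{-1}$; by Remark~\ref{dconju} the diagonal of $t\alpha t^{-1}$ agrees with that of $\alpha$, so it represents the same class $\bar d$, while the choices $p^2\mid b$ and $p^{r+1}\mid c$ guarantee $t\alpha t^{-1}\in\Phi_{r+2}^2\cap\Gamma^0(p)$ (the upper-right entry is divided by $p$ and the lower-left is multiplied by $p$). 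Thus $t\alpha t^{-1}$ is a legitimate nebentypus lift for the level-$(r+1)$ action, so $c_t$ intertwines the two nebentypus actions. Composing, $U'$ is $\Gamma$-equivariant, and hence so is $U=\iota\circ U'$.

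The main obstacle I anticipate is the step for $c_t$: reconciling the requirement that $t\alpha t^{-1}$ land back in $\Gamma^0(p)$ (which fails for a generic lift, since $t$-conjugation divides the upper-right entry by $p$) with the well-definedness of the action. This is resolved precisely by using the independence of the nebentypus action from the chosen lift to select an $\alpha$ with $p^2\mid b$, after which Remark~\ref{dconju} supplies the compatibility of the $d$-coordinate. A secondary point requiring care is verifying that $\alpha$ permutes the transfer coset representatives in the way needed for the naturality of $V$.
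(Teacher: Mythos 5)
Your proof is correct, and its overall architecture coincides with the paper's: both factor $U$ through~\eqref{local-equation} as (inclusion)\,$\circ$\,($t$-conjugation)\,$\circ$\,(transfer) and check that each constituent intertwines conjugation by a nebentypus lift, with the transfer step handled by naturality under an automorphism preserving $\Phi_r^s\cap\Gamma^0(p)$ (conjugated coset representatives) and the inclusion step by functoriality. The genuine divergence is at the $t$-conjugation square, which is the crux of the lemma. The paper works with an \emph{arbitrary} lift $\alpha\in\Phi_{r+1}^2\cap\Gamma^0(p)$ and writes $\alpha t\alpha^{-1}g\alpha t^{-1}\alpha^{-1}=(\alpha t\alpha^{-1}t^{-1})\,tgt^{-1}\,(\alpha t\alpha^{-1}t^{-1})^{-1}$, then observes that $\alpha t\alpha^{-1}t^{-1}\in\Gamma_1(Np^{r+1})\subseteq\Phi_{r+1}^s$, so this inner conjugation is invisible in $\Phi_{r+1}^{s\,\ab}$; no special choice of lift is required. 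You instead refine the lift so that $p^2\mid b$, making $t\alpha t^{-1}$ itself a legitimate nebentypus lift in $\Phi_{r+2}^2\cap\Gamma^0(p)$ of the same class $\bar d$ (via Remark~\ref{dconju}), and then invoke well-definedness of the action. This is sound and conceptually clean ($c_t$ literally intertwines the nebentypus actions at levels $r$ and $r+1$), but it costs two extra verifications the paper avoids: the existence of such a refined lift (a small supplement to Lemma~\ref{etasur}: take $c=Np^{r+1}$, $d$ a lift of $\bar d$ of the form $1+kqN$, solve $ad-b'(cp^2)=1$, and set $b=p^2b'$), and the appeal to lift-independence, which your write-up uses twice. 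Note also that the two mechanisms are at bottom the same: well-definedness is itself proved by showing the discrepancy between two lifts lies in a subgroup of $\Phi_r^s$, whose conjugation action dies in the abelianization --- exactly the paper's commutator observation, so your route repackages rather than replaces it. One small correction of wording: $\alpha$ does not permute the specific representatives $\alpha_i=\psmat{1}{i}{0}{1}$; rather, $\{\alpha\alpha_i\alpha^{-1}\}$ is another valid system of coset representatives, and independence of the transfer from this choice is what the naturality argument actually uses.
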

\begin{proof}
Though the proof of this lemma is similar to the proof of~\cite[Lem. 3.5.]{Eme99}, here we make some remarks
in between, hence we briefly recall its proof. It is easy to see that $ \alpha (\Phi_r^s\cap \Gamma^0(p)) \alpha^{-1}= \Phi_r^s\cap \Gamma ^0(p)$ for any 
$\alpha \in \Phi_{r+1}^{1} \cap \Gamma^0(p)$, since $\alpha \Phi_r^s \alpha^{-1} \subseteq \Phi_r^s$. Look 
at the following commutative diagram
\begin{equation*}
\xymatrix{
\Phi_r^{s \> \ab} \ar[r]^{\alpha - \alpha^{-1}\qquad} \ar[d]^{V} & 
   \Phi_r^{s \> \ab} \ar[d]^{V} \\
(\Phi_r^s\cap \Gamma ^0(p))^{\ab} \ar[r]^{\alpha - \alpha^{-1}\qquad \qquad} \ar[d]^{{t-t^{-1}}} & 
   (\alpha (\Phi_r^s\cap \Gamma ^0(p)) \alpha^{-1})^{\ab}=(\Phi_r^s\cap \Gamma ^0(p))^{\ab}\ar[d]^{\alpha t 
    \alpha^{-1}(-)\alpha t^{-1} \alpha^{-1}}\\
\Phi_{r+1}^{s \> \ab} \ar[r]^{\alpha - \alpha ^{-1} \qquad} \ar[d] & 
   (\alpha \Phi_{r+1}^s \alpha^{-1})^{\ab}=\Phi_{r+1}^{s \> \ab}\ar[d] \\
\Phi_r^{s \> \ab} \ar[r]^{ \alpha - \alpha^{-1} \qquad} & 
   (\alpha \Phi_r^s \alpha ^{-1})^{\ab} = \Phi_r^{s \> \ab}.
}
\end{equation*}
The top square in the diagram above commutes because if $\{ \gamma_1,\ldots,\gamma_q \}$ form  a set 
coset representatives for the group $\Phi_r^s \cap \Gamma^0(p)$ in $\Phi_r^s$, so is the set  
$\{ \alpha \gamma_1 \alpha^{-1}, \ldots, \alpha \gamma_q \alpha^{-1} \}$.
% form another set of coset representatives for the group $\Phi_r^s \cap \Gamma^0(p)$ in $\Phi_r^s$. 
Observe that, this diagram commutes even 
if $\alpha \in \Phi_{r}^{1} \cap \Gamma^0(p)$. The last square commutes by the functoriality of the transfer map. 

We now prove the commutativity of the middle square, i.e., the map 
\begin{equation}
\label{localequ}
  {\alpha t \alpha^{-1}(-)\alpha t^{-1} \alpha^{-1}}: (\Phi_r^s\cap \Gamma ^0(p))^{\ab} \ra \Phi_{r+1}^{s \> \ab} 
\end{equation}
is $t - t^{-1}$. If $g \in \Phi_r^{s} \cap \Gamma^0(p)$, then
\begin{equation*}
  \alpha t \alpha^{-1} g \alpha t^{-1}\alpha^{-1} = (\alpha t \alpha^{-1} t^{-1}) 
                       t g t^{-1} (\alpha t\alpha^{-1}t^{-1})^{-1}.
\end{equation*}
Since $\alpha t \alpha^{-1} t^{-1} \in \Gamma_1(Np^{r+1})$ for $\alpha \in \Phi_{r+1}^{1} \cap \Gamma^0(p)$, we see
that the conjugation by $\alpha t \alpha^{-1} t^{-1}$ induces identity on $\Phi_{r+1}^{s \> \ab}$\ (because elements 
of $\Phi_{r+1}^s$ do commute in $\Phi_{r+1}^{s \> \ab}$). 

In the above diagram composition of the vertical morphisms on either side are the operator $U$ and it 
commutes with the automorphism of $\Phi^s_r$ induced by conjugation by $\alpha$, but we know  
$\Gamma$ acts on $\Phi_r^s$ by conjugation by such elements $\alpha$.
%Hence the following diagram also commutes:
%\begin{equation*}
%\xymatrix{
%\Phi_r^{s \> \ab} \ar[r]^{\alpha - \alpha^{-1} } \ar[d]^{V} & \Phi_r^{s \> \ab} \ar[d]^V \\
%(\Phi_r^s\cap \Gamma ^0(p))^{\ab} \ar[r]^{\alpha - \alpha^{-1}} \ar[d]^{t-t^{-1}} & (\Phi_r^s\cap \Gamma %^0(q))^{\ab}\ar[d]^{t-t^{-1}} \\
%\Phi_{r+1}^{s \> \ab} \ar[r]^{\alpha - \alpha ^{-1} } \ar[d] & \Phi_{r+1}^{s \> \ab}\ar[d] \\
%\Phi_r^{s \> \ab} \ar[r]^{ \alpha - \alpha^{-1} } & \Phi_r^{s \> \ab}.
%}
%\end{equation*}
\end{proof}
Observe that the inclusion $\Gamma_1(Np^r) \subseteq \Phi^s_r$
gives rise to the another transfer map
\begin{equation*}
\Phi^{s \> \ab}_r \stackrel{V}{\lra} \Gamma_1(Np^r)^{\ab} 
\end{equation*}

\begin{lem}
\label{VUcomm}
The transfer morphism $V: \Phi_r^{s \> \emph{ab}} \rightarrow \Gamma_1(Np^r)^{\emph{ab}}$ commutes 
with the action of $U$ on its source and target.
\end{lem}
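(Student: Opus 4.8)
The plan is to realize the lemma as the commutativity of a rectangle whose two horizontal rows are the defining factorizations of $U$ on its source $\Phi_r^{s \> \ab}$ and on its target $\Gamma_1(Np^r)^{\ab} = \Phi_r^{r \> \ab}$ (the latter being the case $s = r$ of the construction in~\eqref{local-equation}, recalling $\Phi_r^r = \Gamma_1(Np^r)$), and whose three vertical arrows are all instances of the transfer $V$. Explicitly, the top row is $\Phi_r^{s \> \ab} \to (\Phi_r^s \cap \Gamma^0(p))^{\ab} \xrightarrow{t-t^{-1}} \Phi_{r+1}^{s \> \ab} \to \Phi_r^{s \> \ab}$, the bottom row is the same with $s$ replaced by $r$ throughout, and the vertical maps are the transfers attached to the inclusions $\Gamma_1(Np^r) \subseteq \Phi_r^s$, then $\Gamma_1(Np^r) \cap \Gamma^0(p) \subseteq \Phi_r^s \cap \Gamma^0(p)$, and finally $\Phi_{r+1}^r \subseteq \Phi_{r+1}^s$. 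Commutativity of the whole rectangle is precisely $V \circ U = U \circ V$, so it suffices to check the three squares separately, in the spirit of the diagrams used for Lemmas~\ref{Ucomm} and~\ref{UGaComm}.

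The first square, formed by the two transfers into the $\Gamma^0(p)$-parts, commutes by transitivity of the transfer: both composites equal the single transfer $\Phi_r^{s \> \ab} \to (\Gamma_1(Np^r) \cap \Gamma^0(p))^{\ab}$ attached to the tower $\Gamma_1(Np^r) \cap \Gamma^0(p) \subseteq \Phi_r^s$. This is where Remark~\ref{isosubgroup-1} enters concretely: it gives $\Phi_r^s = \Gamma_1(Np^r) \cdot (\Phi_r^s \cap \Gamma^0(p))$ with $\Gamma_1(Np^r) \cap (\Phi_r^s \cap \Gamma^0(p)) = \Gamma_1(Np^r) \cap \Gamma^0(p)$, so one may pick a single system of coset representatives of $\Gamma_1(Np^r)$ in $\Phi_r^s$ lying inside $\Phi_r^s \cap \Gamma^0(p)$, which simultaneously computes both transfers. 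The middle square commutes by naturality of the transfer under an isomorphism of pairs: conjugation by $t$ carries $\Phi_r^s \cap \Gamma^0(p)$ isomorphically onto $\Phi_{r+1}^s$ and restricts to an isomorphism $\Gamma_1(Np^r) \cap \Gamma^0(p) \xrightarrow{\sim} \Phi_{r+1}^r$, and the transfer commutes with any such isomorphism of a group together with a finite-index subgroup.

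The remaining square, relating the transfer $V$ with the covariant map induced by the inclusion $\Phi_{r+1}^s \subseteq \Phi_r^s$, is the crux: here a covariant (inclusion-induced) map must be interchanged with a contravariant (transfer) map, an interchange governed by a Mackey double-coset formula rather than by plain functoriality. The hard part will be to show that this formula collapses to a single term. For that I would establish the decomposition $\Phi_r^s = \Gamma_1(Np^r) \cdot \Phi_{r+1}^s$ together with $\Gamma_1(Np^r) \cap \Phi_{r+1}^s = \Phi_{r+1}^r$; the former follows, exactly as in Remark~\ref{isosubgroup-1}, from surjectivity of $\psmat{a}{b}{c}{d} \mapsto d \bmod \Gamma_r$ from $\Phi_{r+1}^s$ onto $\Gamma_s/\Gamma_r$, and the latter is immediate from $r \geq s$. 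Granting these, $\Phi_r^s$ is a single $(\Gamma_1(Np^r), \Phi_{r+1}^s)$-double coset, so one may choose coset representatives of $\Gamma_1(Np^r)$ in $\Phi_r^s$ that lie in $\Phi_{r+1}^s$ and serve simultaneously as representatives of $\Phi_{r+1}^r$ in $\Phi_{r+1}^s$; with this choice the defining products for the two transfers agree term by term, yielding commutativity after applying the inclusion-induced map. Assembling the three squares gives $V \circ U = U \circ V$. I expect this last square to be the only genuine obstacle, the first two being formal properties of the transfer.
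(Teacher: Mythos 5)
Your proposal is correct and takes essentially the same route as the paper: the paper proves commutativity of the same rectangle, drawn transposed (transfers horizontal, the $V$-then-$(t-t^{-1})$ factorization of $U'$ vertical), handling the all-transfer square by transitivity of the transfer and the conjugation square by noting that $t$-conjugation carries coset representatives to coset representatives. Where you go beyond the written text is your third (Mackey) square interchanging the transfer with the inclusion-induced map $\Phi_{r+1}^{s\,\ab} \ra \Phi_r^{s\,\ab}$: the paper's diagram stops at the $U'$-stage and supplies this step only implicitly, via the observation that the conjugated representatives $t\sigma_d t^{-1}$ are simultaneously coset representatives for $\Gamma_1(Np^r)$ in $\Phi_r^s$ (Remark~\ref{isosubgroup-1}), which is exactly your single-double-coset collapse---so your explicit treatment of that square completes the paper's argument rather than departing from it.
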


\begin{proof}
It suffices to prove that the following diagram (in which $V$ denotes the transfer maps between 
various abelianizations) commutes:
\begin{equation*}
\xymatrix{
\Phi_r^{s \> \ab} \ar[r]^{V} \ar[d]^{V} & \Phi_r^{r \> \ab} \ar[d]^V \\
(\Phi_r^s\cap \Gamma ^0(p))^{\ab} \ar[r]^{V} \ar[d]^{t-t^{-1}} & 
(\Phi_r^r\cap \Gamma ^0(p))^{\> \ab}\ar[d]^{t-t^{-1}} \\
\Phi_{r+1}^{s \> \ab} \ar[r]^V & \Phi_{r+1}^{r\> \ab}.
}
\end{equation*}
The top square in the diagram above commutes because of functoriality of the transfer map. The commutativity 
of the bottom square follows by the following calculation.

If $\sigma_d = \psmat{a}{b}{c}{d}$, where $d$ runs through coset representatives of $\Gamma_r$ in $\Gamma_s$,
forms a set of coset representatives for the group $\Gamma_1(Np^r)\cap \Gamma^0(p)$ in $\Phi^s_r \cap \Gamma^0(p)$,
then so are $t \sigma_d t^{-1} = \psmat{a}{b/p}{cp}{d}$ 
for the group $\Gamma_1(Np^r)$ in $\Phi_r^s$ (by Remark~\ref{isosubgroup-1}).
\end{proof}

In this section, we have defined the $U$-operators for the congruence subgroups $\{ \Phi_r^s \}$ and 
proved that morphisms between these congruence subgroups respects the action of $U$ and 
this action commutes with the action of $\Gamma$.  

\section{Ordinary parts}
Let $A$ be a commutative finite $\Z_p$-algebra and $U$ be a non-zero element of $A$.  
It well-known that $A$ factors as a product of local rings.
%The projection of $U$ onto some of the local factors of $A$ will be a non-unit, while its projection onto the others will be a unit. 
Let $A^{\ord}$ denote the product of all those local rings of $A$ 
in which the projection of $U$ is a unit. This is a flat $A$-algebra. 

Let $M$ be any module in the abelian category of $\Z_p[X]$-modules which are finitely generated as $\Z_p$-modules.
In this case, we take $A$ to be  the image of $\Z_p[X]$ in $\text{End}_{\Z_p}(M)$,
which is a finite $\Z_p$-algebra,
and $U$ to be the image of $X$. 
We define
\begin{equation*}
 M^{\ord}:= M \otimes_A A^{\ord} 
\end{equation*}
and call this the ordinary part of $M$. 
%Since the endomorphism ring $\mathrm{End}_{\Z_p}(M)$ of $M$ is a finite $\Z_p$-algebra, then so is the image of $\Z_p[X]$ in  $\text{End}_{\Z_p}(M)$.
Observe that taking ordinary parts is an exact functor on our abelian category.

If we consider $X$ to be the $U$-operator corresponding to the prime $p$, we may consider the ordinary part 
of the $\Z_p$-homology of the curve $Y_r$, i.e., the module $(\Gamma_1(Np^r)^{\ab} \otimes \Z_p)^{\ord}$, 
which is a $\Gamma$-module by Lemma~\ref{UGaComm}. 

We have the following theorem for the prime $p = 2$, which is similar to Theorem $3.1$ in~\cite{Hid86} 
for $p \geq 5$ and for the congruence subgroups $\Gamma_1(Np^r)$ for $r \geq 1$. 
\begin{thm}
  \label{Hida}
  If $r \geq s > 1$, then the morphism of abelian groups
\begin{equation*}
  (\Gamma_1(Np^r) \otimes \Z_p)^{\emph{\ord}} / \mfa_s \ra (\Gamma_1(Np^s) \otimes \Z_p)^{\emph{\ord}}
\end{equation*}
is an isomorphism.
\end{thm}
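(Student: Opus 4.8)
The plan is to run the natural map through the intermediate groups $\Phi_r^s$ exactly as in the factorization recalled before the theorem, and to show that after applying the (exact) ordinary-part functor the two a priori non-invertible arrows both become isomorphisms. Write $H_n := (\Gamma_1(Np^n)^{\ab}\otimes\Z_p)^{\ord}$ and, for our fixed $s$, $P_n := (\Phi_n^{s\,\ab}\otimes\Z_p)^{\ord}$; since $\Phi_s^s=\Gamma_1(Np^s)$ we have $P_s=H_s$. Recall the factorization
\begin{equation*}
  \Gamma_1(Np^r)^{\ab}\ras \Gamma_1(Np^r)^{\ab}/\mfa_s \hookrightarrow \Phi_r^{s\,\ab}\xrightarrow{\ \iota\ }\Gamma_1(Np^s)^{\ab},
\end{equation*}
where $\iota$ is induced by $\Phi_r^s\subset\Gamma_1(Np^s)$. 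I would prove that $(-\otimes\Z_p)^{\ord}$ turns the second arrow into an isomorphism $H_r/\mfa_s\cong P_r$ and the third into an isomorphism $P_r\cong P_s=H_s$, and that the composite is the map in the statement.

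For the first isomorphism, I start from the extension $1\to\Gamma_1(Np^r)\to\Phi_r^s\to\Gamma_s/\Gamma_r\to1$. Its five-term exact sequence reads $\mH_2(\Gamma_s/\Gamma_r)\to \Gamma_1(Np^r)^{\ab}/\mfa_s\to\Phi_r^{s\,\ab}\to\Gamma_s/\Gamma_r\to0$, and since $\Gamma_s/\Gamma_r$ is cyclic its $\mH_2$ vanishes, so this is a short exact sequence. By Corollary~\ref{coker} all three terms are $\Z[U]$-modules and the maps are $U$-equivariant, the induced $U$-action on the cokernel being the one computed in Lemma~\ref{lemmultiq}. Tensoring with $\Z_p$ and applying the exact ordinary functor, the term $(\Gamma_s/\Gamma_r)^{\ord}$ vanishes: by Lemma~\ref{lemmultiq} the operator $U$ acts there as multiplication by $p$, which is nilpotent on the finite $p$-group $\Gamma_s/\Gamma_r$ and hence a unit in no local factor. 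This yields $(\Gamma_1(Np^r)^{\ab}/\mfa_s\otimes\Z_p)^{\ord}\cong P_r$. Finally, because $U$ commutes with the $\Gamma$-action by Lemma~\ref{UGaComm}, multiplication by $\gamma_s-1$ is a $\Z_p[U]$-linear endomorphism of $M:=\Gamma_1(Np^r)^{\ab}\otimes\Z_p$; applying the exact ordinary functor to the presentation $M\xrightarrow{\gamma_s-1}M\to M/\mfa_s M\to0$ identifies $(\Gamma_1(Np^r)^{\ab}/\mfa_s\otimes\Z_p)^{\ord}$ with $M^{\ord}/\mfa_s M^{\ord}=H_r/\mfa_s$. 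Combining gives $H_r/\mfa_s\cong P_r$, realised by the second arrow of the factorization.

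For the second isomorphism, note that $\iota$ is the composite of the $r-s$ transition maps $\pi\colon\Phi_n^{s\,\ab}\to\Phi_{n-1}^{s\,\ab}$ for $s<n\le r$, each induced by $\Phi_n^s\subset\Phi_{n-1}^s$ and hence $U$-equivariant by Lemma~\ref{Ucomm}, so each descends to $\pi\colon P_n\to P_{n-1}$. Now invert $\pi$ on ordinary parts using~\eqref{UPiComm-1}, which gives $U'\circ\pi=U$ on $\Phi_n^{s\,\ab}$ and $\pi\circ U'=U$ on $\Phi_{n-1}^{s\,\ab}$. These relations also force $U\circ U'=(U'\circ\pi)\circ U'=U'\circ(\pi\circ U')=U'\circ U$, so $U'$ commutes with $U$ and descends to $U'\colon P_{n-1}\to P_n$. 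Since $U$ is invertible on ordinary parts by construction, the composite of $U'$ with $U^{-1}$ is a two-sided inverse of $\pi$: indeed $\pi\circ(U'\circ U^{-1})=U\circ U^{-1}=\mathrm{id}$ on $P_{n-1}$ and $(U'\circ U^{-1})\circ\pi=U'\circ\pi\circ U^{-1}=U\circ U^{-1}=\mathrm{id}$ on $P_n$. Hence each $\pi$ is an isomorphism of ordinary parts, and so is their composite $\iota\colon P_r\xrightarrow{\sim}P_s=H_s$. Chaining the two isomorphisms yields $H_r/\mfa_s\cong H_s$, and since each arrow is the ordinary part of the corresponding arrow in the factorization, the composite is precisely the map of the theorem.

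The main obstacle is not any single computation but the equivariance bookkeeping that makes the ordinary functor applicable: one must check that every map in sight — the edge maps of the five-term sequence, the transition maps $\pi$, the auxiliary $U'$, and multiplication by $\gamma_s-1$ — commutes with $U$, so that it descends to ordinary parts, and that the two ordinary-part isomorphisms genuinely splice into the natural map rather than merely existing abstractly. Conceptually the crux is the interplay exploited in the two steps: passing to ordinary parts simultaneously kills the finite cyclic cokernel $\Gamma_s/\Gamma_r$, because there $U$ acts by the non-unit $p$, and makes the transition maps $\pi$ invertible, because there $U$ becomes a unit. It is exactly the failure of these two phenomena at the integral level that the ordinary functor repairs.
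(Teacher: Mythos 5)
Your proposal is correct and follows essentially the same route as the paper's own proof: factor the map through $\Phi_r^{s\,\ab}$, kill the cokernel $\Gamma_s/\Gamma_r$ on ordinary parts because $U$ acts there by multiplication by $p$, which is nilpotent (Lemma~\ref{lemmultiq}), and invert the transition maps $\pi$ on ordinary parts using $U'$ and the relations~\eqref{UPiComm-1}, exactly as in the paper. The only minor difference is that you justify the left-exactness of $0 \to \Gamma_1(Np^r)^{\ab}/\mfa_s \to \Phi_r^{s\,\ab} \to \Gamma_s/\Gamma_r \to 0$ via the five-term homology sequence and the vanishing of $\mH_2$ of a finite cyclic group, whereas the paper derives it in its preliminaries from the commutator identity $[\Phi_r^s,\Phi_r^s]=[\Phi_r^s,\Gamma_1(Np^r)]$ (a central-extension argument); these are equivalent observations, and your explicit checks of the $U$-equivariance and of the identification $(M/\mfa_sM)^{\ord}=M^{\ord}/\mfa_sM^{\ord}$ make precise steps the paper leaves implicit.
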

\begin{proof}
We shall show that
\begin{equation}
\label{local}
(\Gamma_1(Np^r)^{\ab} \otimes \Z_p)^{\ord} / \mfa_s \overset{\sim}{\lra} (\Phi_r^{s \> \ab} \otimes \Z_p )^{\ord} 
\overset{\sim}{\lra} (\Gamma_1(Np^s)^{\ab} \otimes \Z_p)^{\ord}.
\end{equation}
%We have constructed an operator $U' : \Phi^{s \> \ab}_{r-1} \ra \Phi^{s \> \ab}_{r},$ satisfying the equations (\ref{UPiComm-1}) 
%and (\ref{UpiComm-2}).
 If $\pi : \Phi^{s \> \ab}_{r} \ra \Phi^{s \> \ab}_{r-1} $
is the morphism induced by the inclusion $\Phi^{s} _{r} \subset \Phi^{s}_{r-1}$, 
then
\begin{equation*}
U'\circ \pi = U \in \mathrm{End}(\Phi^{s \> \ab}_{r}),\>\pi \circ U'= U 
\in \mathrm{End}(\Phi^{s \> \ab}_{r-1}).
\end{equation*}
By Lemma~\ref{Ucomm}, we have the following diagram
\begin{equation*}
\xymatrix{
\Phi^{s \> \ab}_{r-1} \ar[r]^{\pi} \ar[d]^{U'} \ar[dr]^{U} & \Phi^{s \> \ab}_{r-2} \ar[d]^{U'} \\
\Phi^{s \> \ab}_{r} \ar[r]^{\pi} \ar[d]^{U'} \ar[dr]^{U} & \Phi^{s \> \ab}_{r-1} \ar[d]^{U'} \\
\Phi^{s \> \ab}_{r+1} \ar[r]^{\pi}  & \Phi^{s \> \ab}_{r}.
} 
\end{equation*}
The existence of $U'$ implies that upon tensoring over $\Z_p$ and taking the ordinary parts $\pi$ 
induces an isomorphism (and $U^{-1} \circ U'$ provides an inverse to $\pi$)
\begin{equation*}
(\Phi^{s \> \ab}_{r}\otimes \Z_p)^{\text{\ord}} =  (\Phi^{s \> \ab}_{r-1}\otimes \Z_p)^{\text{\ord}}.
\end{equation*}
By induction on $r$, we obtain the second isomorphism in~\eqref{local}, i.e.,
\begin{equation*}
(\Phi^{s \> \ab}_{r}\otimes \Z_p)^{\ord} = (\Phi^{s \> \ab}_{s}\otimes \Z_p)^{\text{\ord}} 
          = (\Gamma_1(Np^s)^{\ab}\otimes \Z_p)^{\ord}.
\end{equation*}

To prove the first isomorphism consider the short exact sequence
\begin{equation*}
 1 \ra \Gamma_1(Np^r)^{\ab} /\mfa_s \ra \Phi^{s \> \ab}_{r} \ra (\Gamma_s/\Gamma_r) \ra 1.
\end{equation*}
By tensoring this sequence with $\Z_p$ and then taking the ordinary parts to obtain 
\begin{equation*}
1 \ra (\Gamma_1(Np^r)^{\ab}\otimes \Z_p)^{\ord} /\mfa_s \ra (\Phi^{s \> \ab}_{r} \otimes \Z_p)^{\ord} \ra (\Gamma_s/\Gamma_r)^{\ord} \ra 1 ,
\end{equation*}
because $\Z_p$ is flat as a $\Z$-module and ordinary parts preserves exactness.
By Lemma~\ref{lemmultiq}, the operator $U$ acts on $\Gamma_s/\Gamma_r$ as multiplication by $p$ 
and so is a nilpotent operator, as $\Gamma_s/\Gamma_r$ is a $p$-torsion group. Thus 
$(\Gamma_s/\Gamma_r)^{\ord} = 0$, and hence the Theorem follows.
\end{proof}

\section{Iwasawa modules}
We have the following inverse system indexed by natural numbers $r \geq 2 $,
\begin{equation*}
\dotsm \ra \Gamma_1(Np^r)^{\ab}\otimes \Z_p \ra \dotsm \ra \Gamma_1(Np^2)^{\ab}
           \otimes \Z_p.
\end{equation*}
Define the Iwasawa module by
\begin{equation*}
\mfW := \varprojlim_{r \geq 2} \ \Gamma_1(Np^r)^{\ab}\otimes \Z_p.
\end{equation*}
The profinite group $\Gamma$ acts on the $\Z_p$-module $\Gamma_1(Np^r)^{\ab}\otimes \Z_p$ through 
its finite quotient $\Gamma/\Gamma_r$. Thus the Iwasawa module $\mfW$ becomes a module over 
the completed group algebra 
\begin{equation*}
\Lambda := \Z_p[[\Gamma]] = \varprojlim_{r \geq 2} \Z_p[\Gamma/\Gamma_r].
\end{equation*}

Though the Iwasawa module $\mfW$ is difficult to understand,
% because we do not have a good characterization 
%of the image of the morphism
%\begin{equation*} 
  %\Gamma_1(Np^r)^{\ab} \ra \Gamma_1(Np^s)^{\ab} 
%\end{equation*}
%$(r \geq s > 1)$ in general, and so we cannot get a good description of the projective limit. 
by Theorem \ref{Hida}, we can understand the ordinary part of $\mfW$ very well. 
To make the statement clear, let us slightly abstract the situation. 

Let $\{ M_r \}_{r \geq 2}$ be a system of $\Lambda$-modules. Further, assume that each $M_r$ is point-wise 
fixed by $\Gamma_r$ and hence a module  over $\Lambda/\mfa_r \Lambda= \Z_p [\Gamma/\Gamma_r]$. 
For each $r \geq s \geq 2$, we have a map 
$M_r \ra M_s$ such that it factors via
\begin{equation*}
  M_r/\mfa_sM_r \ra M_s.
\end{equation*}
%by tensoring the map with $\Lambda/\mfa_s$. 
Define $W := \varprojlim_{r \geq 2} M_r$. We have 
a collection of maps $W \ra M_r$ for each $r \geq 2$ and they factor as
\begin{equation*}
  W/\mfa_rW \ra M_r.
\end{equation*}

\begin{prop}
\label{Keylemma}
  Assume that each $M_r$ is p-adically complete and for each $r \geq s \geq 2$, 
  $M_r/\mfa_sM_r \ra M_s$ is an isomorphism. Then $W/\mfa_sW \ras M_s $ is an isomorphism.
\end{prop}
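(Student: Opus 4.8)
The plan is to show that the natural projection $W \to M_s$, which factors through $W/\mfa_s W$, is surjective with kernel exactly $\mfa_s W$. First I would record what the isomorphism hypothesis says about the transition maps of the system $\{M_r\}$: applied to the pair $(r+1,r)$ it gives $M_{r+1}/\mfa_r M_{r+1} \xrightarrow{\sim} M_r$, and since $M_{r+1}\to M_r$ factors as $M_{r+1}\twoheadrightarrow M_{r+1}/\mfa_r M_{r+1}\xrightarrow{\sim} M_r$, every transition map in the tower is surjective. Consequently the projection $W=\varprojlim_r M_r \to M_s$ is surjective (lift any $m_s\in M_s$ step by step up the tower using surjectivity at each stage). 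Because $M_s$ is a $\Z_p[\Gamma/\Gamma_s]$-module it is annihilated by $\mfa_s$, so this projection factors through $W/\mfa_s W$, and the induced map $W/\mfa_s W \to M_s$ is surjective. No completeness is needed for this half.

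Next I would identify the kernel. Write $\theta=\gamma_s-1$ for the generator of the principal ideal $\mfa_s$. I claim $\ker(W\to M_s)$ is the submodule of $W$ consisting of systems $(w_r)$ with $w_r\in\theta M_r$ for all $r\geq s$. Indeed, each component $w_r$ maps to $w_s$ through $M_r\twoheadrightarrow M_r/\mfa_s M_r \xrightarrow{\sim} M_s$, so $w_s=0$ forces $w_r\in\mfa_s M_r=\theta M_r$ for every $r\geq s$; conversely $w_r\in\theta M_r$ for all $r$ gives $w_s\in\theta M_s=0$. Since $\mfa_s W=\theta W$ is visibly contained in this kernel, it remains to prove the reverse inclusion, namely that every compatible system $(w_r)$ with $w_r\in\theta M_r$ has the form $\theta(v_r)$ for some compatible system $(v_r)\in W$.

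The main obstacle is exactly this passage: solving $\theta v_r=w_r$ simultaneously and compatibly across all $r$. For each $r$ the solution set $S_r=\{v\in M_r : \theta v=w_r\}$ is a nonempty coset of the torsion submodule $M_r[\theta]$, and the transition maps carry $S_{r+1}$ into $S_r$; the task is to show $\varprojlim_r S_r\neq\emptyset$. Equivalently, in the six-term exact sequence attached to $0\to M_r[\theta]\to M_r\xrightarrow{\theta}\theta M_r\to 0$ this is the vanishing of $\varprojlim^1_{r\geq s} M_r[\theta]$, using $\varprojlim^1_r M_r=0$ from the surjective transitions. I would resolve it by compactness: each $M_r$ is a finitely generated $\Z_p$-module, hence compact in its $p$-adic topology, so each $S_r$ is a nonempty compact set and the images $\operatorname{Im}(S_{r'}\to S_r)$ form a descending chain of nonempty compact subsets with the finite-intersection property, whose intersections assemble to a point of $\varprojlim_r S_r$. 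This yields $(v_r)\in W$ with $\theta(v_r)=(w_r)$, giving $\ker(W\to M_s)=\mfa_s W$ and completing the argument. The delicate point is precisely the move from termwise solvability to a compatible inverse-limit solution, and this is where the completeness (finite generation) of the $M_r$ is indispensable.
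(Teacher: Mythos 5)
Your skeleton matches the paper's proof: surjectivity of $W \to M_s$ from the surjective transition maps, identification of the kernel of $W\to M_s$ as the compatible systems $(w_r)$ with $w_r \in \mfa_s M_r = \theta M_r$ (writing $\theta=\gamma_s-1$ as you do), and reduction to producing a compatible system $(v_r)$ with $\theta v_r = w_r$. The gap lies in how you produce it. You resolve the $\varprojlim^1$ obstruction by compactness, asserting that ``each $M_r$ is a finitely generated $\Z_p$-module, hence compact in its $p$-adic topology.'' But finite generation is not a hypothesis of the Proposition: the abstract setting assumes only that each $M_r$ is $p$-adically complete, and completeness does not imply compactness. (For example, the $p$-adic completion of a free $\Z_p$-module of countably infinite rank is complete, yet its reduction mod $p$ is an infinite discrete quotient, so the module cannot be compact.) Your parenthetical ``the completeness (finite generation) of the $M_r$'' conflates two genuinely different conditions, and without compactness the finite-intersection-property argument for $\varprojlim_r S_r \neq \emptyset$ collapses. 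As written, you have proved the Proposition only under an extra finiteness hypothesis --- which does hold in the paper's application in Theorem~\ref{mainthmone}, where $M_r=(\Gamma_1(Np^r)^{\ab}\otimes\Z_p)^{\ord}$ is $\Z_p$-free of finite rank, but which the statement itself does not grant.

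The paper's proof gets by with completeness alone: it first upgrades $p$-adic completeness to $\mfm$-adic completeness of each $M_r$ (via the chain $\mfm^{p^{r-2}} \subseteq ((\gamma_2-1)^{p^{r-2}},p) \subseteq \mfn \subseteq \mfm$) and then invokes the successive-approximation argument of~\cite[Prop.~5.1]{Eme99}. The structural input your argument is missing is the factorization $\gamma_r-1=(\gamma_s-1)\nu_{r,s}$ with $\nu_{r,s}=1+\gamma_s+\dots+\gamma_s^{p^{r-s}-1}$: since $M_r$ is killed by $\mfa_r$, every $\theta$-torsion class in $M_r$ is the image of a $\theta$-torsion class from level $r'>r$ up to an element of $\nu_{r,s}M_r$, and $\nu_{r+k,s}$ acts on $M_r$ as $p^k\nu_{r,s}$; hence the images in $M_r$ of the torsion submodules $M_{r+k}[\theta]$ stabilize modulo $p^k$. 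This topological Mittag--Leffler property, combined with $p$-adic completeness of $M_r$, makes the telescoping series of corrections to your chosen solutions $v_r$ converge, killing exactly the $\varprojlim^1$ obstruction you identified --- no compactness needed. So either add this norm-element computation and replace the compactness step by a convergence argument, or state explicitly that you are proving the Proposition under the stronger hypothesis of finite generation over $\Z_p$; as it stands, the decisive step rests on an assumption the Proposition does not contain.
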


\begin{proof}
For $r \geq s \geq 2$, the maps $M_r \ra M_s$  are surjective, and hence the canonical map 
from $W \ra M_s $ is also surjective. We shall show that the kernel is $\mfa_sW$. 

Since each $M_r$ is $p$-adically complete and is point-wise fixed by $\Gamma_r$, we have 
$M_r = \varprojlim_i M_r/\mfn^iM_r$, where $\Gamma_r= \overline{\langle \gamma_r \rangle}$ and $\mfn = (\gamma_r-1,p)$, i.e.,
each $M_r$ is $\mfn$-adically complete.

By induction on $i$, we get that $\gamma_s^{p^i}-1/\gamma_s-1 \in (\gamma_s-1,p)^{i}$. In particular,  
we have $\gamma_2^{p^{r-2}}-1/\gamma_2-1 \in \mfm=(\gamma_2-1,p)$. Hence,  $\mfm^{p^{r-2}} \subseteq ((\gamma_2-1)^{p^{r-2}},p) \subseteq \mfn \subseteq \mfm = (\mfa_2,p)$. As a result, we see that
each $M_r$ is $\mfm$-adically complete, since they are  $\mfn$-adically complete. Once we have that
each $M_r$ is $\mfm$-adically complete, then proving the injectivity of the above map is quite 
similar to the proof of ~\cite[Prop. 5.1]{Eme99}.
 % If we have a tuple $m:= (m_r) \in W$ such that its projection to $M_r$ is zero then we want $m \in \mfa_rW$. Now let us fix some $s$ and suppose that $(m_r)$ is an element of the projective limit $W$ whose projection $m_s$ to $M_s$
% vanishes; we must construct an element $(m_r')$ of $W$ such that $(m_r)=(\gamma_s-1)(m_r')$. By assumption we may construct an element
% $m_{1,s+1}$ of $M_{s+1}$ such that $m_{s+1}= (\gamma_s - 1 ) m_{1,s+1}$. Let $(m_{1,r})$ denote an element of $W$ projecting to $m_{1,s+1}$. Then
% the element $(m_r)-(\gamma_s-1)(m_{1,r})$ has vanishing projection to $M_{s+1}$. Proceeding inductively, we construct  for any $i > 0$ an element 
% $(m_{i,r})$ of $M$ such that
% \begin{equation*}
% (m_r)- \sum_{j=1}^i  (\gamma_s ^{p^{j-1}}-1)(m_{j,r})= (m_r)- (\gamma_s-1) \sum_{j=1}^i  \left( \frac{\gamma_s ^{p^{j-1}}-1 }{\gamma_s -1} \right)  (m_{j,r})
% \end{equation*}
% has vanishing projection to $M_{s+i}$.
% Since each $M_r$ is $\mfm$-adically complete, we see that the infinite series
% \begin{equation*}
% (m_r') := \sum_{j=1}^{\infty} \left( \frac{\gamma_s ^{p^{j-1}}-1}{\gamma_s-1}\right)(m_{j,r})
% \end{equation*}
% yields a well-defined element of $M$ with the property that
% \begin{equation*}
%  (m_r)=(\gamma_s-1)(m_r')
% \end{equation*}
% This proves the lemma.
\end{proof}
The following Theorem is an immediate consequence of the Proposition above.

\begin{thm}
 \label{mainthmone}
   For any $r \geq 2$, we have
   \begin{equation*}
     \mfW^{\emph{\ord}}/\mfa_{r}\mfW^{\emph{\ord}} \cong (\Gamma_1(Np^r)^{\emph{\ab}}\otimes \Z_p)^{\emph{\ord}}  
   \end{equation*}
   is the $\Gamma_r$-co-invariants of $\mfW^{\emph{\ord}}$.
\end{thm}

\begin{proof}
  This follows from Proposition~\ref{Keylemma} together with Theorem~\ref{Hida}
\end{proof}

The above Theorem is a key ingredient for the proof of the Theorem~\ref{mainfree}.
The $\Lambda$-module $\mfW^{\ord}$ is a compact $\Lambda$-module (under the projective limit of the $p$-adic topologies 
on each module $\Gamma_1(Np^r)^{\ab} \otimes \Z_p$, which are free of finite rank over $\Z_p$,
and also since $\mfW^{\ord}$ is a direct factor of $\mfW$).

Furthermore, Theorem~\ref{mainthmone} implies that the projective limit topology on $\mfW^{\ord}$ coincides with 
its $\mfm$-adic topology (where $\mfm=(\mfa_2,p) \subset \Lambda$ denotes the maximal ideal of 
$\Lambda$), because the kernels of the projections $\Lambda \ra \Z_p/p^r\Z_p[\Gamma/\Gamma_r]$
are co-final with the sequence of ideals $\mfm^{r}$ in $\Lambda$. Thus $\mfW^{\ord}$ is a $\Lambda$-module, 
compact in its $\mathfrak{m}$-adic topology such that
\begin{equation*}
  \mfW^{\ord}/\mathfrak{m} = \mfW^{\ord}/(\mfa_2,p) = (\Gamma_1(Nq)^{\ab} \otimes \Z_p/p)^{\ord}
\end{equation*}
is a finite dimensional $\Z_p/p\Z_p$-module, of dimension $d$ (say). By Nakayama's lemma, 
we have that $\mfW^{\ord}$ is a finitely generated $\Lambda$-module with a minimal generating 
set has cardinality $d$.  We have the following theorem for the prime $p = 2$, which is similar to the main theorem in~\cite{Hid86} for $p \geq 5$.
% , for the congruence subgroups $\Gamma_1(Np^r)$ and for the $\Lambda'$-module 
% \begin{equation*} W' :=\varprojlim_{r \geq 2} \ (\Gamma_1(Np^r)^{\ab} \otimes \Z_p)^{\ord},
% \end{equation*}where 
% \begin{equation*}
% \Lambda' := \varprojlim_{r \geq 2 } \  \Z_p \left[\frac{1+p\Z_p}{1+p^r\Z_p}\right].
% \end{equation*}
\begin{thm}[Main Result]
 \label{mainfree}
   The module $\mfW^{\emph{\ord}}$ is free of finite rank over $\Lambda$, and its $\Lambda$-rank is equal to $d$.
\end{thm}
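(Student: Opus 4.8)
The plan is to establish freeness at each finite level first and then pass to the inverse limit. Write $R_r := \Z_p[\Gamma/\Gamma_r] = \Lambda/\mfa_r$ and $M_r := (\Gamma_1(Np^r)^{\ab}\otimes\Z_p)^{\ord}$, so that $\mfW^{\ord} = \varprojlim_r M_r$ and, by Theorem~\ref{mainthmone}, $\mfW^{\ord}/\mfa_r\mfW^{\ord} = M_r$. Since $\Gamma\cong\Z_p$, the algebra $\Lambda$ is a two-dimensional regular local ring with maximal ideal $\mfm = (\mfa_2,p) = (\gamma_2-1,p)$ and residue field $\Z/p\Z$, and the paragraph preceding the theorem already produces, via Nakayama's lemma, a surjection $\Lambda^d\ras\mfW^{\ord}$ with $d = \dim_{\Z/p\Z}\mfW^{\ord}/\mfm\mfW^{\ord}$. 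It therefore suffices to prove that this surjection is injective, which I will deduce from the $R_r$-freeness of each $M_r$.

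First I would show that each $M_r$ is free over $R_r$. Since $\Gamma/\Gamma_r$ is a finite cyclic $p$-group, $R_r$ is local with residue field $\Z/p\Z$, and $M_r$ is $\Z_p$-free, being a direct summand of the $\Z_p$-free module $\Gamma_1(Np^r)^{\ab}\otimes\Z_p$. For a finitely generated $\Z_p$-free module over the group ring of a cyclic $p$-group, freeness is equivalent to cohomological triviality, i.e.\ to the vanishing of the Tate cohomology groups $\hat{\mH}^i(\Gamma/\Gamma_r, M_r)$. Each such group is a finite $p$-group, so $p$ is nilpotent on it; on the other hand $U$ acts on it, since $U$ commutes with $\Gamma$ by Lemma~\ref{UGaComm}, and $U$ is invertible there because it is already invertible on $M_r$. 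The crucial input --- a transfer computation parallel to Lemma~\ref{lemmultiq} --- is that $U$ acts on $\hat{\mH}^i(\Gamma/\Gamma_r, M_r)$ as multiplication by $p$. Being simultaneously invertible and nilpotent, $U$ forces $\hat{\mH}^i(\Gamma/\Gamma_r, M_r) = 0$; hence $M_r$ is $R_r$-free, and its rank equals $\dim_{\Z/p\Z}\mfW^{\ord}/\mfm\mfW^{\ord} = d$ by Theorem~\ref{mainthmone}.

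Finally I would assemble the inverse limit. Hida's control theorem (Theorem~\ref{Hida}) gives $M_{r+1}/\mfa_r M_{r+1}\cong M_r$, so each transition map $M_{r+1}\ras M_r$ is a surjection of free modules; lifting a basis of $M_r$ successively along these surjections yields a compatible system of bases identifying $\varprojlim_r M_r$ with $\varprojlim_r (\Lambda/\mfa_r)^d = \Lambda^d$, whence $\mfW^{\ord}\cong\Lambda^d$ is free of rank $d$. I expect the cohomological-triviality step to be the main obstacle --- concretely, the transfer computation that $U$ acts as multiplication by $p$ on the Tate cohomology of $\Gamma/\Gamma_r$; the rest is formal. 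In the language of depth this step is exactly the assertion that $\gamma_2-1$ is a non-zero-divisor on $\mfW^{\ord}$ (the companion condition that $p$ be a non-zero-divisor on $\mfW^{\ord}/(\gamma_2-1)\mfW^{\ord} = M_2$ is immediate from Theorem~\ref{mainthmone}, as $M_2$ is $\Z_p$-free), and it is precisely what excludes the $(\gamma_2-1)$-torsion carried by non-free modules such as $\Lambda/(\gamma_2-1)$ or $\mfm$.
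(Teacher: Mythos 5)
Your route is genuinely different from the paper's. The paper (following Emerton) never proves freeness at finite level: it shows instead that $\mfW^{\ord}$ is \emph{reflexive} over $\Lambda$, by establishing a control theorem for the $\Lambda_r$-duals $\Hom_{\Lambda_r}((\Gamma_1(Np^r)^{\ab}\otimes\Z_p)^{\ord},\Lambda_r)$ (Lemma~\ref{MainLemma}, Proposition~\ref{Equality1}), computing the double dual as an inverse limit (Theorem~\ref{mainthmtwo}), and invoking the fact that finitely generated reflexive modules over the two-dimensional regular local ring $\Lambda$ are free. Your plan --- freeness of each $M_r$ over $R_r=\Z_p[\Gamma/\Gamma_r]$ via cohomological triviality, then lifting bases up the tower --- is closer to Hida's original treatment, and its formal parts are all sound: the equivalence of freeness with cohomological triviality for $\Z_p$-free modules over the local ring $R_r$, the ``invertible and nilpotent forces zero'' trick, Nakayama for the rank, and the identification $\varprojlim_r M_r\cong\Lambda^d$ from compatible bases.

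The gap is exactly at the step you flag as the crux: the claim that $U$ acts as multiplication by $p$ on $\hat{\mH}^i(\Gamma/\Gamma_r,M_r)$ is asserted, not proved, and it is \emph{not} ``a transfer computation parallel to Lemma~\ref{lemmultiq}''. Lemma~\ref{lemmultiq} computes the $U$-action on the cokernel of $\Gamma_1(Np^r)^{\ab}\ra\Phi_r^{s\,\ab}$; there every term of the transfer collapses because one works modulo the image of $\Gamma_1(Np^r)^{\ab}$, so by Remark~\ref{dconju} only diagonal entries survive. In $\hat{\mH}^0(\Gamma_s/\Gamma_r,M_r)=M_r^{\Gamma_s/\Gamma_r}/\nu M_r$ (with $\nu=\sum_{g\in\Gamma_s/\Gamma_r}g$) no such collapse occurs: for $A\in\Gamma_1(Np^r)$ the classes of $tAt^{-1}$ and $A$ genuinely differ in $\Gamma_1(Np^r)^{\ab}$, and showing the discrepancy is a norm is the whole problem. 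In fact, since $\nu=V\circ\iota_*$ and $\iota_*$ is surjective onto $(\Phi_r^{s\,\ab}\otimes\Z_p)^{\ord}$ by Theorem~\ref{Hida}, the vanishing of $\hat{\mH}^0$ on ordinary parts is \emph{equivalent} to surjectivity of the transfer $V:(\Phi_r^{s\,\ab}\otimes\Z_p)^{\ord}\ra (M_r)^{\Gamma_s/\Gamma_r}$ --- which is precisely the hard input the paper imports from Emerton's \S 8 inside Lemma~\ref{MainLemma} (there in dual form: $V^*$ surjective with kernel $\mfa_s$). So your ``main obstacle'' is not a side computation; it is the same nontrivial ingredient on which the paper's own proof rests, and as written your proposal has a hole at its center.

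The claim is true, but a correct proof needs more machinery. One way: run Hochschild--Serre for $1\ra\Gamma_1(Np^r)\ra\Phi_r^s\ra\Gamma_s/\Gamma_r\ra 1$. Both groups are torsion-free (this is where $p=2$, $r\geq s\geq 2$ enters, exactly as in the paper's use of Emerton \S 8), hence free groups of cohomological dimension one, so the two-row spectral sequence degenerates into $U$-equivariant isomorphisms
\begin{equation*}
\mH_{i}(\Gamma_s/\Gamma_r,\mH_1(\Gamma_1(Np^r),\Z_p))\cong \mH_{i+2}(\Gamma_s/\Gamma_r,\Z_p),\qquad i\geq 1,
\end{equation*}
and $U$ acts on the right-hand side as multiplication by $p$ because it does so on $\mH_0(\Gamma_1(Np^r),\Z_p)=\Z_p$ --- that is the genuine analogue of Lemma~\ref{lemmultiq}; periodicity for the cyclic group then handles all degrees. (Note also that the odd-degree vanishing needs no $U$-computation at all: $\hat{\mH}^{-1}(\Gamma_s/\Gamma_r,M_r)=\ker(\nu)/\mfa_sM_r$ embeds into $M_r/\mfa_sM_r\cong M_s$, which is $\Z_p$-free, while being killed by $p^{r-s}$; the real content of cohomological triviality sits entirely in even degrees.) With that step supplied, your argument is a valid and attractive alternative to the paper's duality/reflexivity proof.
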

As a corollary, we see that, for $r \geq 2$, the $\Z_p$-rank of the free $\Z_p$-module $(\Gamma_1(Np^r)^{\ab} \otimes \Z_p)^{\ord}$ is $d$. In particular, these $\Z_p$-ranks are independent of $p^r$ in the level. Using this result,
we have proved control theorems for ordinary $2$-adic families of modular forms, see~\cite{GK12}.
The classical versions of this theorem for $p = 2, 3$ do not seem to be explicitly available in 
the literature, though an ad{\`e}lic version of it can be found in~\cite{Hid88}.

%We shall show that the finitely generated $\Lambda$-module $\mfW^{{\ord}}$ is a reflexive $\Lambda$-module. Therefore, the freeness of $\mfW^{\ord}$ follows by~\cite{NSW08}.

% We cannot conclude the freeness of the module from the module being reflexive.
% However, the completed group ring $\Lambda$ is a regular local ring of dimension two, and 
% If one can prove that these modules $(\Gamma_1(Np^r)^{\ab} \otimes \Z_p)^{\ord}$ are free over 
% $\Z_p[\Gamma/\Gamma_r]$, then the above theorem is easier to prove. The difficulty lies in the fact 
% that it is difficult to characterize free modules over the group rings $\Z_p[\Gamma/\Gamma_r]$.  
% We can show that the above modules are reflexive but we cannot conclude that they are free. 
% However, the completed group ring $\Lambda$ is a regular local ring of dimension two, and so any 
% `reflexive $\Lambda$-module' is free \cite{Neu00}.  We will prove Theorem~\ref{mainfree} by 
% considering the duality theory of the modules $(\Gamma_1(Np^r)^{\ab} \otimes \Z_p)^{\ord}$
% showing that they are reflexive as $\Z_p[\Gamma/\Gamma_r]$-modules.
 
\section{Reflexivity results}
To prove Theorem~\ref{mainfree}, it enough to show that $\mfW^{{\ord}}$ is a reflexive $\Lambda$-module~\cite{NSW08}. We show this by considering the duality theory of the modules $(\Gamma_1(Np^r)^{\ab} \otimes \Z_p)^{\ord}$ and showing that they are reflexive as $\Z_p[\Gamma/\Gamma_r]$-modules.
Now, we briefly recall the notion of reflexivity,  and the necessary results. For more details, see~\cite[\S 6]{Eme99}. 

Suppose that $R$ is a commutative ring, $G$ is a finite group, and $M$ is a left $R[G]$-module. 
Let $N$ be any $R$-module. Then $\Hom_R(M,N)$ is a right $R[G]$-module, via 
\begin{equation*}
(f*g)(x) := f(g^{-1}  x).
\end{equation*}
Since the ring $R[G]$  is naturally a bi-module over itself, via the ring multiplication,
$R[G] \otimes_R N$ is an $R[G]$-bi-module, making $\Hom_{R[G]}(M, R[G] \otimes_R N )$ a 
right $R[G]$-module.
% and the module structure is given by $(\phi*g)(x):= \phi(x) \cdot g$,
%for any $\phi \in \Hom_{R[G]}(M, R[G] \otimes_R N )$, $x \in M$.
%  and the Hom being taken in the category of left $R[G]$-modules.
\begin{lem}[\cite{Eme99}]
There is a canonical isomorphism of right $R[G]$-modules
\begin{equation*}
\Hom_R(M,N) = \Hom_{R[G]}(M, R[G]\otimes_R N).
\end{equation*}
\end{lem}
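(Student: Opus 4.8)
The plan is to exhibit the isomorphism explicitly in both directions and then to verify left $R[G]$-linearity, bijectivity, and right $R[G]$-equivariance by hand. Conceptually this is the coincidence, for a finite group $G$, of the induced module $R[G]\otimes_R N$ with the coinduced module $\Hom_R(R[G],N)$, together with the adjunction between restriction and coinduction; but the explicit maps are cleaner to work with. I would define
\[
\Psi\colon \Hom_R(M,N)\lra \Hom_{R[G]}(M,\,R[G]\otimes_R N),\qquad \Psi(f)(m)=\sum_{g\in G} g\otimes f(g^{-1}m),
\]
the factor $g^{-1}$ being forced (up to normalisation) if $\Psi(f)$ is to be left $R[G]$-linear.

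First I would confirm that $\Psi(f)$ really is a morphism of left $R[G]$-modules for the left action $h\cdot(g\otimes n)=hg\otimes n$. Substituting $g=hg'$ gives
\[
\Psi(f)(hm)=\sum_{g\in G} g\otimes f(g^{-1}hm)=\sum_{g'\in G} hg'\otimes f(g'^{-1}m)=h\cdot\Psi(f)(m),
\]
so $\Psi(f)$ lands in $\Hom_{R[G]}(M,\,R[G]\otimes_R N)$ and $\Psi$ is well defined.

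Next I would build the inverse. Let $\pi\colon R[G]\otimes_R N\ra N$ be the $R$-linear map extracting the coefficient of the identity $e\in G$, and set $\Phi(\phi)=\pi\circ\phi$. Then $\Phi\circ\Psi=\mathrm{id}$ is immediate, since $\pi(\Psi(f)(m))=f(m)$. For $\Psi\circ\Phi=\mathrm{id}$ I would use the left $R[G]$-linearity of a given $\phi$: writing $\phi(m)=\sum_g g\otimes n_g(m)$, the relation $\phi(g^{-1}m)=g^{-1}\phi(m)$ shows that the $e$-coefficient of $\phi(g^{-1}m)$ equals $n_g(m)$, whence $\Psi(\Phi(\phi))(m)=\sum_g g\otimes n_g(m)=\phi(m)$. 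Thus $\Psi$ is a bijection with inverse $\Phi$.

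Finally I would check that $\Psi$ respects the right $R[G]$-module structures: on the target via right multiplication $(g\otimes n)\cdot h=gh\otimes n$ on the bimodule $R[G]\otimes_R N$, and on the source via the action of the statement. This is once more a reindexing of the sum defining $\Psi$, now shifting the tensor slot on the right. The only genuine work in the whole argument is this last bookkeeping: keeping straight the left module structure on $M$, the two one-sided actions on $R[G]\otimes_R N$, and the (inverse-twisted) action on $\Hom_R(M,N)$, so that after the substitution the group element sitting in each tensor factor matches on both sides. Once $\Psi$ is shown to be a right $R[G]$-linear bijection, its inverse $\Phi$ is automatically right $R[G]$-linear as well, completing the proof.
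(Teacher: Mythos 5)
The paper itself contains no proof of this lemma---it is quoted verbatim from Emerton's paper---so your proposal can only be measured against the standard argument, and in substance it is that argument: your map $\Psi(f)(m)=\sum_{g\in G}g\otimes f(g^{-1}m)$, with inverse given by reading off the coefficient of the identity, is exactly the explicit form of the adjunction (induction $=$ coinduction for finite $G$) used in \cite{Eme99}. Your verifications of well-definedness (left $R[G]$-linearity of $\Psi(f)$ via the substitution $g=hg'$), of $\Phi\circ\Psi=\mathrm{id}$, and of $\Psi\circ\Phi=\mathrm{id}$ via the identity $n_e(g^{-1}m)=n_g(m)$ are complete and correct.

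However, the one step you deferred as ``bookkeeping''---right $R[G]$-equivariance---is precisely where the only trap lies, and as you describe it the check would fail. You say you will use ``the action of the statement,'' i.e.\ $(f*g)(x)=f(g^{-1}x)$. That formula does not define a right action at all: $(f*(gh))(x)=f(h^{-1}g^{-1}x)$ whereas $((f*g)*h)(x)=f(g^{-1}h^{-1}x)$, so associativity fails for noncommutative $G$; it is a left action written in right-hand notation, a slip in the paper's preamble to the lemma. Worse, with that twisted formula your map satisfies $\Psi(f*h)=\Psi(f)\cdot h^{-1}$ rather than $\Psi(f)\cdot h$: the inverse in the source action and the inverse built into $\Psi$ compound instead of cancelling. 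The statement is rescued by using the standard right action $(f*h)(x):=f(hx)$, which is the convention in \cite{Eme99} and is forced here since it is the only one of the two formulas that is actually a right action; with it, the substitution $g=g'h$ gives
\begin{equation*}
\Psi(f*h)(m)=\sum_{g\in G}g\otimes f(hg^{-1}m)=\sum_{g'\in G}g'h\otimes f(g'^{-1}m)=\Psi(f)(m)\cdot h,
\end{equation*}
as required. So you should carry out this final check with the untwisted action and note explicitly that the displayed action preceding the lemma should read $(f*g)(x)=f(gx)$; with that emendation your proof is correct and coincides with the standard one.
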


%Here is an explicit description of the isomorphism of the above Lemma: for any element $\phi$ of 
%$\Hom_R(M,N)$ the image of $\phi$ in $\Hom_{R[G]}(M, R[G] \otimes_R N)$ is given 
%by the following formula:
%\begin{equation*}
%m \mapsto \sum_{g \in G} g \otimes \phi(g^{-1}m).
%\end{equation*}

In particular, when $N=R$, we see that $M^*$ and $\Hom_{R[G]}(M,R[G])$ are canonically 
isomorphic as right $R[G]$-modules, where $M^{*}:=\Hom_{R}(M,R)$, the $R$-dual of $M$. 
The analogue of the above  lemma for right $R[G]$-modules is also true. Hence,
% and applying it to $M^{*}$ we see that $\Hom_R(M^*,R)$ and  $\Hom_{R[G]}(M^*,R[G])$ , i.e.,
\begin{equation*}
  \Hom_R(M^*,R) = \Hom_{R[G]}(M^*, R[G]).
\end{equation*}
are canonically isomorphic as left $R[G]$-modules.

By definition of $M^{*}$, there is a natural morphism of $R$-modules $ M \ra \Hom_R(M^*,R)$, which is also
a morphism of left $R[G]$-modules. If this natural morphism of $R$-modules is an isomorphism, then 
we say that $M$ is a reflexive $R$-module.  Thus we have proved:
\begin{lem}
 \label{Equality2}
 If $M$ is a left $R[G]$-module which is reflexive as an $R$-module, then $M$ is reflexive as an $R[G]$-module.
\end{lem}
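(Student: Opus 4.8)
The plan is to identify the $R[G]$-biduality map for $M$ with the $R$-biduality map under the canonical isomorphisms established just above, and thereby transport reflexivity over $R$ to reflexivity over $R[G]$. Here $M$ is reflexive as an $R[G]$-module precisely when the natural evaluation map $M \to \Hom_{R[G]}(\Hom_{R[G]}(M, R[G]), R[G])$ is an isomorphism, this being the evident analogue over $R[G]$ of the definition of reflexivity given for $R$. So the whole argument reduces to understanding the $R[G]$-bidual of $M$ in terms of the $R$-bidual.

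First I would rewrite the inner $R[G]$-dual. Applying the canonical isomorphism of the lemma with $N = R$ (so that $R[G] \otimes_R R = R[G]$) identifies $\Hom_{R[G]}(M, R[G])$ with $M^{*} = \Hom_R(M, R)$ as right $R[G]$-modules. Next, viewing $M^{*}$ as a right $R[G]$-module, the right-module analogue of that same isomorphism identifies $\Hom_{R[G]}(M^{*}, R[G])$ with $\Hom_R(M^{*}, R)$ as left $R[G]$-modules. Composing these two identifications yields a canonical isomorphism of left $R[G]$-modules
\[
\Hom_{R[G]}(\Hom_{R[G]}(M, R[G]), R[G]) \cong \Hom_R(M^{*}, R),
\]
so the $R[G]$-bidual of $M$ is canonically the $R$-bidual of $M$.

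The heart of the argument, and the step I expect to be the main obstacle, will be checking that under this chain of canonical isomorphisms the natural $R[G]$-evaluation map $M \to \Hom_{R[G]}(\Hom_{R[G]}(M, R[G]), R[G])$ corresponds exactly to the natural $R$-evaluation map $M \to \Hom_R(M^{*}, R)$. This amounts to a diagram chase: one must unwind the explicit formula $(f * g)(x) = f(g^{-1} x)$ defining the $R[G]$-module structures, together with the explicit form of the isomorphisms of the lemma, and verify that evaluation-at-$m$ on the one side is carried to evaluation-at-$m$ on the other. Since every isomorphism in play is canonical and natural in $M$, I expect the compatibility of the two evaluation maps to follow formally, but this is the single place where the explicit $R[G]$-action genuinely has to be tracked rather than invoked.

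Finally, granting this compatibility, the conclusion is immediate. Reflexivity of $M$ as an $R$-module means the natural map $M \to \Hom_R(M^{*}, R)$ is an isomorphism; transporting along the canonical identification of biduals above, the natural map $M \to \Hom_{R[G]}(\Hom_{R[G]}(M, R[G]), R[G])$ is then an isomorphism as well, which is exactly the assertion that $M$ is reflexive as an $R[G]$-module.
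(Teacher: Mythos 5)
Your proposal is correct and follows essentially the same route as the paper: apply the canonical isomorphism with $N=R$ to identify $\Hom_{R[G]}(M,R[G])$ with $M^{*}$, apply its right-module analogue to identify $\Hom_{R[G]}(M^{*},R[G])$ with $\Hom_R(M^{*},R)$, and observe that the natural biduality map $M \to \Hom_R(M^{*},R)$ is a map of left $R[G]$-modules, so reflexivity over $R$ transports to reflexivity over $R[G]$. The compatibility of the two evaluation maps that you flag as the main obstacle is exactly the point the paper treats implicitly (via naturality of the identifications), so there is no substantive difference.
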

The crux of this Lemma is that to check the reflexivity of $R[G]$-module $M$ over  $R[G]$, 
it is enough to check it over $R$. Now we need to understand how to use the reflexivity 
results for modules over $\Z_p[\Gamma/\Gamma_r]$ to show the reflexivity of $\mfW^{\ord}$ as a 
$\Lambda$-module. 
%The following Section contains the details involved in taking the limits.

\section{Proof of Theorem~\ref{mainfree}}
For $r \geq 2$, and $N \in \N$ such that $(p,N)=1$. We define the cohomology of $Y_r$ as
\begin{equation*}
\mH^1(Y_r,\Z_p) := \Hom_{\Z}(\Gamma_1(Np^r)^{\ab}, \Z_p)  
                       = \Hom_{\Z_p}(\Gamma_1(Np^r)^{\ab}\otimes \Z_p, \Z_p).
\end{equation*}
The ring $\Lambda$ acts on $\Gamma_1(Np^r)^{\ab}\otimes \Z_p$ through its quotient
$\Lambda_r := \Lambda /\mfa_r =\Z_p[\Gamma/\Gamma_r].$
% Clearly we have
% \begin{equation}
%  \label{appdualmodu}
%   \Hom_{\Z_p}(\Gamma_1(Np^r)^{\ab}\otimes \Z_p,\Z_p) 
%                     = \Hom_{\Lambda_r}(\Gamma_1(Np^r)^{\ab}\otimes \Z_p,\Lambda_r).
% \end{equation}
More generally, if $r \geq s > 1$ then the ring $\Lambda_s$ is equal to $\Lambda_r/\mfa_s$, hence
$\Lambda_r \ras \Lambda_s$. Thus we get the following sequence of morphisms of $\Lambda_r$-modules
\begin{equation*}
\begin{split}
  \Hom_{\Lambda_r}(\Gamma_1(Np^r)^{\ab}\otimes \Z_p,\Lambda_r)  \ra & \> 
   \Hom_{\Lambda_r}(\Gamma_1(Np^r)^{\ab}\otimes \Z_p,\Lambda_r)/ \mfa_s  \\
   \ra \Hom_{\Lambda_r}(\Gamma_1(Np^r)^{\ab}\otimes \Z_p,\Lambda_s) 
       & =\Hom_{\Lambda_s}(\Gamma_1(Np^r)^{\ab}\otimes \Z_p/\mfa_s,\Lambda_s).
\end{split}
\end{equation*}
If $M$ is any $\Z_p[U]$-module, which is finitely generated as a $\Z_p$-module, then 
so is the $\Z_p$-dual $M^{*}:= \Hom_{\Z_p}(M,\Z_p)$. Here $M^*$ is a $\Z_p[U]$-module via the dual action of $U$. 
Clearly $(M^*)^{\ord} = (M^{\ord})^{*}$, 
i.e., taking ordinary parts commutes with duals. Thus we may take ordinary parts 
of the above diagram of homomorphisms to obtain a diagram
\begin{equation*}
 \begin{split}
  \Hom_{\Lambda_r}((\Gamma_1(Np^r)^{\ab}\otimes \Z_p)^{\ord},\Lambda_r) & \lra
  \Hom_{\Lambda_r}((\Gamma_1(Np^r)^{\ab}\otimes \Z_p)^{\ord},\Lambda_r)/\mfa_s  \\
  & \lra \Hom_{\Lambda_s}((\Gamma_1(Np^r)^{\ab}\otimes \Z_p)^{\ord}/ \mfa_s,\Lambda_s). 
\end{split}
\end{equation*}
By Theorem~\ref{Hida}, we have
%Combining this with the isomorphism
%\begin{equation*} 
%(\Gamma_1(Np^r)^{\ab} \otimes \Z_p)^{\ord}/ \mfa_s = (\Gamma_1(Np^s)^{\ab} \otimes \Z_p)^{\ord}
%\end{equation*}
%of Theorem~\ref{Hida} yields the diagram 
\begin{equation*}
\begin{split}
  \Hom_{\Lambda_r}((\Gamma_1(Np^r)^{\ab}\otimes \Z_p)^{\ord},\Lambda_r) & \lra
  \Hom_{\Lambda_r}((\Gamma_1(Np^r)^{\ab}\otimes \Z_p)^{\ord},\Lambda_r)/\mfa_s  \\
  & \lra \Hom_{\Lambda_s}((\Gamma_1(Np^s)^{\ab}\otimes \Z_p)^{\ord},\Lambda_s). 
\end{split}
\end{equation*}

\begin{lem}
\label{MainLemma}
 The morphism
\begin{equation*}
\begin{split}
\Hom _{\Lambda_r}((\Gamma_1(Np^r)^{\emph{\ab}}\otimes \Z_p)^{\emph{\ord}},\Lambda_r)/\mfa_s & \ra \Hom _{\Lambda_s}((\Gamma_1(Np^s)^{\emph{\ab}}\otimes \Z_p)^{\emph{\ord}},\Lambda_s)
\end{split}
\end{equation*}
is an isomorphism.
\end{lem}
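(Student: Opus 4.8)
The plan is to recognise the displayed arrow as a base-change morphism for $\Hom$ and then, via the duality isomorphism recalled just above, to reduce it to a statement about $\Z_p$-linear duals. Throughout write $M_r := (\Gamma_1(Np^r)^{\ab}\otimes\Z_p)^{\ord}$, a finitely generated free $\Z_p$-module carrying an action of $\Lambda_r = \Z_p[\Gamma/\Gamma_r]$. Since $\Lambda_s = \Lambda_r/\mfa_s$, the source is $\Hom_{\Lambda_r}(M_r,\Lambda_r)\otimes_{\Lambda_r}\Lambda_s$, while by Theorem~\ref{Hida} we have $M_s \cong M_r/\mfa_s M_r = M_r\otimes_{\Lambda_r}\Lambda_s$, so the target is $\Hom_{\Lambda_s}(M_r\otimes_{\Lambda_r}\Lambda_s,\Lambda_s)$. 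Thus the map in question is exactly the canonical base-change morphism $\Hom_{\Lambda_r}(M_r,\Lambda_r)\otimes_{\Lambda_r}\Lambda_s \ra \Hom_{\Lambda_s}(M_r\otimes_{\Lambda_r}\Lambda_s,\Lambda_s)$. As $\Lambda_s$ is a proper quotient (hence not flat) over $\Lambda_r$, this morphism is not formally an isomorphism, so the specific structure of the ordinary part must be exploited.

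Next I would apply the duality isomorphism of~\cite{Eme99} (with $R=\Z_p$, $G=\Gamma/\Gamma_r$, $N=\Z_p$) to identify $\Hom_{\Lambda_r}(M_r,\Lambda_r) \cong M_r^{*}$ as $\Lambda_r$-modules, where $M_r^{*} := \Hom_{\Z_p}(M_r,\Z_p)$ carries the contragredient $\Gamma$-action, and likewise $\Hom_{\Lambda_s}(M_s,\Lambda_s)\cong M_s^{*}$. Because each $M_r$ is $\Z_p$-free, $\Z_p$-duality is exact and carries the surjection $M_r \ras M_s = M_r/\mfa_s M_r$ to an inclusion $M_s^{*}\hookrightarrow M_r^{*}$ whose image is the subgroup of $\Gamma_s$-invariants $(M_r^{*})^{\Gamma_s}$. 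Chasing the identifications, the base-change morphism becomes the norm (trace) map $\mathrm{Nm}_{\Gamma_s/\Gamma_r}\colon M_r^{*}/\mfa_s M_r^{*}=(M_r^{*})_{\Gamma_s}\ra (M_r^{*})^{\Gamma_s}=M_s^{*}$. Hence proving the lemma is equivalent to showing that this norm map is an isomorphism, that is, that the finite cyclic group $\Gamma_s/\Gamma_r$ acts cohomologically trivially on $M_r^{*}$, equivalently (dualising once more) on $M_r$ itself.

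For this last step I would use the $U$-operator. Formally the norm is only a rational isomorphism: its kernel and cokernel are the Tate cohomology groups $\hat{\mH}^{-1}$ and $\hat{\mH}^{0}$ of $\Gamma_s/\Gamma_r$ acting on $M_r$, finite $p$-groups annihilated by $\lvert\Gamma_s/\Gamma_r\rvert = p^{r-s}$, so integrality is exactly where ordinarity must enter. I would realise the norm through the transfer: for the normal inclusion $\Gamma_1(Np^r)\lhd \Phi_r^{s}$ of index $p^{r-s}$ one has $\mathrm{tr}\circ i_{*} = \mathrm{Nm}_{\Gamma_s/\Gamma_r}$ on $M_r$, where $i_{*}$ is induced by the inclusion and $\mathrm{tr}$ is the transfer $V$, both $U$-equivariant by Corollary~\ref{coker} and Lemma~\ref{VUcomm}. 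By Theorem~\ref{Hida} the map $i_{*}$ induces an isomorphism $(M_r)_{\Gamma_s}\overset{\sim}{\ra}(\Phi_r^{s\,\ab}\otimes\Z_p)^{\ord}$, so the claim reduces to the bijectivity of $\mathrm{tr}$ on the ordinary part. Since $U$ acts invertibly there (commuting with $\Gamma$ by Lemma~\ref{UGaComm}) while acting as multiplication by $p$ on the quotient $\Gamma_s/\Gamma_r$ (Lemma~\ref{lemmultiq}), one expects the obstruction groups to carry a $U$-action that is simultaneously invertible and nilpotent, forcing them to vanish. The main obstacle is precisely this point: pinning down that $U$ acts nilpotently on the relevant Tate cohomology, equivalently that the transfer is bijective on the ordinary part. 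This is the one place where the argument genuinely depends on ordinarity rather than on formal homological algebra, and it will require propagating the mult-by-$p$ behaviour of Lemma~\ref{lemmultiq} from the quotient $\Gamma_s/\Gamma_r$ to the full obstruction groups.
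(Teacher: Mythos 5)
Your reduction is correct as far as it goes: writing $M_r=(\Gamma_1(Np^r)^{\ab}\otimes\Z_p)^{\ord}$, the identification of the displayed arrow, via the duality isomorphism $\Hom_{\Lambda_r}(M_r,\Lambda_r)\cong M_r^{*}$ and Theorem~\ref{Hida}, with the norm map $(M_r^{*})_{\Gamma_s}\ra(M_r^{*})^{\Gamma_s}$ of the cyclic group $\Gamma_s/\Gamma_r$ is a faithful translation of the lemma. The problem is that you stop exactly where the mathematical content begins. The assertion that the resulting Tate cohomology groups $\hat{\mH}^{0}$ and $\hat{\mH}^{-1}$ vanish on the ordinary part --- equivalently, that the transfer is bijective onto the invariants after passing to ordinary parts --- \emph{is} the lemma, and you explicitly leave it unproven, calling it ``the main obstacle.'' Moreover, the route you sketch for filling it does not obviously work: Lemma~\ref{lemmultiq} computes the $U$-action on the cokernel $\Gamma_s/\Gamma_r$ of $i_{*}:\Gamma_1(Np^r)^{\ab}\ra\Phi_r^{s\,\ab}$, which is group homology of the quotient with \emph{trivial} coefficients, whereas $\hat{\mH}^{*}(\Gamma_s/\Gamma_r,M_r)$ depends on the full module structure of $M_r$; there is no formal mechanism that ``propagates'' the one to the other. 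Making nilpotence of $U$ on those obstruction groups precise is essentially Hida's original cohomological argument, and it requires a genuine computation with cocycles and double cosets that your proposal does not contain.

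It is worth seeing how the paper closes this gap, because it inverts your expectation of ``where ordinarity must enter.'' After the same duality step, the paper reduces to a statement about the dual transfer $V^{*}:\Hom_{\Z_p}(\Gamma_1(Np^r)^{\ab}\otimes\Z_p,\Z_p)\ra\Hom_{\Z_p}(\Phi_r^{s\,\ab}\otimes\Z_p,\Z_p)$ on the \emph{full, non-ordinary} modules: namely that $V^{*}$ is surjective with kernel $\mfa_s\Hom_{\Z_p}(\Gamma_1(Np^r)^{\ab}\otimes\Z_p,\Z_p)$. This involves no $U$-operator at all; it is the topological result of~\cite[\S 8]{Eme99}, valid for any inclusion of torsion-free groups, and it applies here because $\Phi_r^s\subseteq\Gamma_1(Nq)$ is torsion-free once the level is at least $4$ --- this is precisely where the hypotheses $q=4$ and $r\geq s\geq 2$ are used, and it is the only genuinely $2$-adic point of the proof. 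Ordinarity enters only through Theorem~\ref{Hida} (the vertical identifications in the comparison diagram) and through the exactness of the ordinary-part functor, which transports the non-ordinary statement to the ordinary one. So the step you flag as ``the one place where the argument genuinely depends on ordinarity'' is, in the paper's argument, exactly the step that is ordinarity-free and topological. To complete your proposal you would need either to carry out the Hida-style nilpotence computation in full, or to replace it, as the paper does, by the torsion-free-groups theorem of~\cite{Eme99}.
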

\begin{proof}
By Lemma~\ref{VUcomm}, we may restrict $V$ to the ordinary parts to obtain a morphism
\begin{equation*}
(\Phi_r^{s \> \ab} \otimes \Z_p )^{\ord} \stackrel{V}{\lra} (\Gamma_1(Np^r)^{\ab} \otimes \Z_p)^{\ord}.
\end{equation*}
%There is a dual morphism
%\begin{equation*}
 %\Hom_{\Z_p}( (\Gamma_1(Np^r)^{\ab} \otimes \Z_p)^{\ord},\Z_p) \stackrel{\quad V^*}{\lra}  
 %\Hom_{\Z_p}( (\Phi^{s \> \ab}_r \otimes \Z_p)^{\ord},\Z_p)
%end{equation*}
%which sits in the first column of the following commutative diagram:
Look at the following commutative diagram
\begin{equation*}
\xymatrix{
 \Hom_{\Z_p}( (\Gamma_1(Np^r)^{\ab} \otimes \Z_p)^{\ord},\Z_p) \ar[r]^{\sim} \ar[d]^{V^*} & \Hom_{\Lambda_r}( 
 (\Gamma_1(Np^r)^{\ab} \otimes \Z_p)^{\ord},\Lambda_r) \ar[d] \\
 \Hom_{\Z_p}( (\Phi^{s \> \ab}_r \otimes \Z_p)^{\ord},\Z_p) \ar[dd]^{\wr} & \Hom_{\Lambda_r}( 
 (\Gamma_1(Np^r)^{\ab} \otimes \Z_p)^{\ord},\Lambda_r)/\mfa_s \ar[d] \\
 & \Hom_{\Lambda_s}( (\Gamma_1(Np^r)^{\ab} \otimes \Z_p)^{\ord}/\mfa_s,\Lambda_s) \ar[d]^{\wr}\\
   \Hom_{\Z_p}( (\Gamma_1(Np^s)^{\ab} \otimes \Z_p)^{\ord},\Z_p) \ar[r]^{\sim} & \Hom_{\Lambda_s}( 
   (\Gamma_1(Np^s)^{\ab} \otimes \Z_p)^{\ord},\Lambda_s)
}
\end{equation*}
in which the two horizontal isomorphisms are those provided by Lemma \ref{Equality2}, because $\Lambda_r = \Z_p[\Gamma/\Gamma_r]$. The first vertical map $V^*$ is the dual morphism of $V$ and
 the two vertical isomorphisms are a part of Theorem \ref{Hida} and its proof.

Now to prove the Lemma, it suffices to prove that 
\begin{equation}
\label{final-lem-eqn}
\Hom_{\Z_p}((\Gamma_1(Np^r)^{\ab} \otimes \Z_p)^{\ord},\Z_p) \stackrel{V^*}{\longrightarrow}
\Hom_{\Z_p}((\Phi^{s \> \ab}_r \otimes \Z_p)^{\ord},\Z_p) 
\end{equation}
is surjective and  $\mathrm{kernel}(V^*) = \mfa_s\Hom_{\Z_p}( (\Gamma_1(Np^r)^{\ab} \otimes \Z_p)^{\ord},\Z_p))$.

Since $V$ commutes with $U$ and taking ordinary parts commutes with taking $\Z_p$-duals,
the morphism in~\eqref{final-lem-eqn} is the ordinary part of  the morphism
\begin{equation}
\label{final-lem-eq-2}
\Hom_{\Z_p}(\Gamma_1(Np^r)^{\ab} \otimes \Z_p,\Z_p) \stackrel{V^*}{\longrightarrow}
\Hom_{\Z_p}(\Phi^{s \> \ab}_r \otimes \Z_p,\Z_p). 
\end{equation}

%\begin{equation*}
%\Hom_{\Z_p}( (\Gamma_1(Np^r)^{\ab} \otimes \Z_p),\Z_p) \stackrel{V^*}{\longrightarrow}
%\Hom_{\Z_p}( (\Phi^{s \> \ab}_r \otimes \Z_p),\Z_p).
%\end{equation*}
Now, it suffices to  show that the morphism $V^*$ in~\eqref{final-lem-eq-2}
%\begin{equation}
 %\Hom_{\Z_p}(\Gamma_1(Np^r)^{\ab} \otimes \Z_p),\Z_p \stackrel{V^*}{\longrightarrow}
 %\Hom_{\Z_p}( \Phi^{s \> \ab}_r \otimes \Z_p,\Z_p) 
%\end{equation}
is surjective with kernel equal to $\mfa_s\Hom_{\Z_p}( (\Gamma_1(Np^r)^{\ab} \otimes \Z_p),\Z_p)$,
since taking ordinary parts is also exact and commutes with the action of $\Gamma$.  
But, this claim was proved in~\cite[\S 8]{Eme99} for torsion-free groups $H$ and $G$ such that 
$H \subseteq G$, instead of $\Gamma_1(Np^r) \subseteq \Phi_r^s$. Observe that, when $p = 2$ and $r \geq s \geq 2$,
the groups  $\Gamma_1(Nq)$ and $\Phi_r^s$ are torsion-free, since  $\Gamma_1(M)$ is torsion free for all  $M \geq 3$.  
\end{proof}

We now have all the information needed to prove Theorem~\ref{mainfree}.
Consider the chain of $\Lambda$-modules
\begin{equation*}
  \dotsm \lra \Hom_{\Lambda_r}((\Phi^{r \> \ab}_r\otimes \Z_p)^{\ord},\Lambda_r) 
  \lra  \dotsm  \lra \Hom_{\Z_p}((\Gamma_1(Nq)^{\ab}\otimes \Z_p)^{\ord},\Z_p). 
\end{equation*}

\begin{lem}
\label{cano-2}
There is a canonical isomorphism
 \begin{equation*}
  \Hom_{\Lambda}(\mfW^{\emph{\ord}}, \Lambda) = \varprojlim_r 
  \Hom_{\Lambda_r}((\Gamma_1(Np^r)^{\emph{\ab}} \otimes \Z_p)^{\emph{\ord}},\Lambda_r). 
\end{equation*}
\end{lem}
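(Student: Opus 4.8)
The plan is to reduce the computation of $\Hom_\Lambda(\mfW^{\ord},\Lambda)$ to a level-by-level calculation and then pass to the limit, using Theorem~\ref{mainthmone} as the bridge. First I would recall that $\Lambda=\varprojlim_r \Lambda_r$ as $\Lambda$-modules (this is the definition of the completed group algebra), and that for any $\Lambda$-module $A$ the covariant functor $\Hom_\Lambda(A,-)$ preserves arbitrary inverse limits, since limits commute with limits. Applying this with $A=\mfW^{\ord}$ gives the formal identity $\Hom_\Lambda(\mfW^{\ord},\Lambda)=\varprojlim_r \Hom_\Lambda(\mfW^{\ord},\Lambda_r)$. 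This first reduction is purely categorical and requires no finiteness hypothesis on $\mfW^{\ord}$.

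Next, for each fixed $r\geq 2$ I would identify the terms $\Hom_\Lambda(\mfW^{\ord},\Lambda_r)$ explicitly. Since $\Lambda_r=\Lambda/\mfa_r$ is annihilated by the ideal $\mfa_r$, every $\Lambda$-homomorphism $\mfW^{\ord}\to\Lambda_r$ kills $\mfa_r\mfW^{\ord}$ and hence factors through the quotient $\mfW^{\ord}/\mfa_r\mfW^{\ord}$; this yields a canonical identification $\Hom_\Lambda(\mfW^{\ord},\Lambda_r)=\Hom_{\Lambda_r}(\mfW^{\ord}/\mfa_r\mfW^{\ord},\Lambda_r)$. At this stage Theorem~\ref{mainthmone} enters: it supplies the canonical isomorphism $\mfW^{\ord}/\mfa_r\mfW^{\ord}\cong(\Gamma_1(Np^r)^{\ab}\otimes\Z_p)^{\ord}$, whence $\Hom_{\Lambda_r}(\mfW^{\ord}/\mfa_r\mfW^{\ord},\Lambda_r)\cong\Hom_{\Lambda_r}((\Gamma_1(Np^r)^{\ab}\otimes\Z_p)^{\ord},\Lambda_r)$. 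Composing these three identifications produces, for each $r$, a canonical isomorphism between $\Hom_\Lambda(\mfW^{\ord},\Lambda_r)$ and the $r$-th term of the target inverse system.

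Finally I would assemble the level-wise isomorphisms into an isomorphism of inverse systems and pass to $\varprojlim_r$. The transition maps on the source are induced by the surjections $\Lambda_r\twoheadrightarrow\Lambda_s$, while those on the target are dual to the maps $(\Gamma_1(Np^r)^{\ab}\otimes\Z_p)^{\ord}\to(\Gamma_1(Np^s)^{\ab}\otimes\Z_p)^{\ord}$; the identifications of the previous paragraph are natural in $r$ precisely because the control isomorphisms of Theorem~\ref{mainthmone} are compatible with the transition maps $\mfW^{\ord}/\mfa_r\mfW^{\ord}\to\mfW^{\ord}/\mfa_s\mfW^{\ord}$. Taking the inverse limit then yields the asserted canonical isomorphism $\Hom_\Lambda(\mfW^{\ord},\Lambda)=\varprojlim_r\Hom_{\Lambda_r}((\Gamma_1(Np^r)^{\ab}\otimes\Z_p)^{\ord},\Lambda_r)$.

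The main obstacle I anticipate is this last compatibility check: one must verify that both the factorization $\Hom_\Lambda(\mfW^{\ord},\Lambda_r)=\Hom_{\Lambda_r}(\mfW^{\ord}/\mfa_r\mfW^{\ord},\Lambda_r)$ and the Theorem~\ref{mainthmone} isomorphisms commute with the two families of transition maps, so that the resulting diagram of inverse systems genuinely commutes and the limit isomorphism is well-defined. Since every construction involved is functorial in $r$, this verification should be routine, but it must be carried out with care, because it is exactly the \emph{canonicity} of the isomorphism that the lemma asserts.
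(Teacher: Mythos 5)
Your proposal is correct and follows essentially the same route as the paper: the paper's (very condensed) proof likewise writes $\Hom_{\Lambda}(\mfW^{\ord},\Lambda)=\varprojlim_r \Hom_{\Lambda_r}(\mfW^{\ord}/\mfa_r,\Lambda_r)$ and then invokes Theorem~\ref{mainthmone} to replace $\mfW^{\ord}/\mfa_r$ by $(\Gamma_1(Np^r)^{\ab}\otimes\Z_p)^{\ord}$. Your write-up merely makes explicit the two formal steps (commuting $\Hom$ with the inverse limit, and factoring maps into $\Lambda_r$ through the $\mfa_r$-coinvariants) and the naturality-in-$r$ check that the paper leaves implicit.
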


\begin{proof}
We have the following canonical isomorphisms
\begin{equation*}
  \Hom_{\Lambda}(\mfW^{\ord}, \Lambda)  = \varprojlim _r \Hom_{\Lambda_r}(\mfW^{\ord}/\mfa_r, \Lambda_r) 
   = \varprojlim_r \Hom_{\Lambda_r}((\Gamma_1(Np^r)^{\ab}\otimes \Z_p)^{\ord},\Lambda_r),
\end{equation*}
where the last isomorphism follows from the Theorem~\ref{mainthmone}. 
\end{proof}

\begin{prop}
\label{Equality1}
For $r > 1$, there is a canonical isomorphism
\begin{equation*}
 \Hom_{\Lambda}(\mfW^{\emph{\ord}}, \Lambda)/\mfa_r = 
 \Hom_{\Lambda_r}((\Gamma_1(Np^r)^{\emph{\ab}}\otimes \Z_p)^{\emph{\ord}},\Lambda_r).
\end{equation*}
\end{prop}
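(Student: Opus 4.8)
The plan is to recognize this statement as a direct instance of the abstract control result Proposition~\ref{Keylemma}, applied to the inverse system of dual modules, with Lemma~\ref{cano-2} identifying the inverse limit and Lemma~\ref{MainLemma} supplying the key isomorphism hypothesis. Concretely, I would set
\begin{equation*}
  M_r := \Hom_{\Lambda_r}((\Gamma_1(Np^r)^{\ab}\otimes \Z_p)^{\ord},\Lambda_r)
\end{equation*}
for each $r \geq 2$, and observe that $M_r$ is a $\Lambda$-module on which $\Gamma_r$ acts trivially (since $\Lambda$ acts through $\Lambda_r = \Z_p[\Gamma/\Gamma_r]$), hence a module over $\Lambda/\mfa_r\Lambda$. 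The transition maps $M_r \to M_s$ for $r \geq s > 1$ are the composites appearing in the chain of $\Lambda$-modules displayed just before Lemma~\ref{cano-2}, and by their construction they factor through $M_r/\mfa_s M_r$, as required by the setup of Proposition~\ref{Keylemma}.

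Next I would verify the two hypotheses of Proposition~\ref{Keylemma}. For $p$-adic completeness, note that $\Lambda_r = \Z_p[\Gamma/\Gamma_r]$ is a finite free $\Z_p$-module and $(\Gamma_1(Np^r)^{\ab}\otimes \Z_p)^{\ord}$ is finitely generated over $\Z_p$; hence $M_r$ is a finitely generated $\Z_p$-module and is therefore $p$-adically complete. For the control isomorphism, Lemma~\ref{MainLemma} states precisely that $M_r/\mfa_s M_r \to M_s$ is an isomorphism for all $r \geq s > 1$. With both hypotheses in place, Proposition~\ref{Keylemma} gives that the canonical map $W/\mfa_r W \to M_r$ is an isomorphism, where $W := \varprojlim_r M_r$. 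By Lemma~\ref{cano-2} we have $W = \Hom_{\Lambda}(\mfW^{\ord},\Lambda)$, and substituting the definition of $M_r$ yields the claimed isomorphism
\begin{equation*}
  \Hom_{\Lambda}(\mfW^{\ord},\Lambda)/\mfa_r = \Hom_{\Lambda_r}((\Gamma_1(Np^r)^{\ab}\otimes \Z_p)^{\ord},\Lambda_r).
\end{equation*}

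The main point requiring care—and the step I would expect to be the genuine obstacle—is the bookkeeping that makes Proposition~\ref{Keylemma} applicable verbatim: one must confirm that the transition maps of the system $\{M_r\}$ used to form $\varprojlim_r M_r$ in Lemma~\ref{cano-2} are the very same maps whose reduction Lemma~\ref{MainLemma} shows to be an isomorphism, and that these maps indeed factor through $M_r/\mfa_s M_r$ compatibly with the projections from $W$. Once this compatibility of the inverse system is pinned down, the proposition follows formally by combining Lemma~\ref{cano-2}, Lemma~\ref{MainLemma}, and Proposition~\ref{Keylemma}, with no further computation needed.
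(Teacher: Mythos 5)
Your proposal is correct and takes exactly the paper's route: the paper proves Proposition~\ref{Equality1} in one line by citing Lemma~\ref{MainLemma}, Lemma~\ref{cano-2}, and Proposition~\ref{Keylemma}, which is precisely the combination you assemble. Your explicit verification of the hypotheses of Proposition~\ref{Keylemma} (that each dual module $M_r$ is a finitely generated $\Z_p$-module, hence $p$-adically complete, and that the transition maps factor through $M_r/\mfa_s M_r$ compatibly) simply spells out the bookkeeping the paper leaves implicit.
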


\begin{proof}
The claim follows from Lemma~\ref{MainLemma}, Lemma~\ref{cano-2}, and  Lemma~\ref{Keylemma}.
\end{proof}

\begin{thm}
\label{mainthmtwo}
The module $\mfW^{\ord}$ is $\Lambda$-free.
\end{thm}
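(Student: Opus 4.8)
The plan is to deduce freeness from reflexivity: I would show that $\mfW^{\ord}$ is a reflexive $\Lambda$-module and then invoke the fact (see \cite{NSW08}) that a finitely generated reflexive module over $\Lambda\cong\Z_p[[T]]$, a two-dimensional regular local ring, is automatically free. The $\Lambda$-rank then equals the integer $d$ by the Nakayama count recorded after Theorem~\ref{mainthmone}, so this gives Theorem~\ref{mainfree} as well. Write $M_r^{\ord}:=(\Gamma_1(Np^r)^{\ab}\otimes\Z_p)^{\ord}$ and $\mfW^{*}:=\Hom_\Lambda(\mfW^{\ord},\Lambda)$. To prove reflexivity I would study the natural evaluation map $\mathrm{ev}\colon \mfW^{\ord}\lra\Hom_\Lambda(\mfW^{*},\Lambda)$ and show it is an isomorphism by reducing it modulo $\mfa_r$ for every $r\geq 2$.

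The first half of the argument is already in place. Lemma~\ref{cano-2} exhibits $\mfW^{*}$ as the inverse limit of the dual system $\{\Hom_{\Lambda_r}(M_r^{\ord},\Lambda_r)\}$, Lemma~\ref{MainLemma} supplies the control property $\Hom_{\Lambda_r}(M_r^{\ord},\Lambda_r)/\mfa_s\cong\Hom_{\Lambda_s}(M_s^{\ord},\Lambda_s)$, and each such dual, being a free $\Z_p$-module of finite rank, is $p$-adically complete; thus Proposition~\ref{Keylemma} applies and yields Proposition~\ref{Equality1}, namely $\mfW^{*}/\mfa_r\cong\Hom_{\Lambda_r}(M_r^{\ord},\Lambda_r)$.

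Next I would run the same machinery a second time, with $\mfW^{*}$ in place of $\mfW^{\ord}$. Since $M_r^{\ord}$ is a free, hence reflexive, $\Z_p$-module, Lemma~\ref{Equality2} shows it is reflexive as a $\Lambda_r$-module, so the doubly-dual system $\{\Hom_{\Lambda_r}(\Hom_{\Lambda_r}(M_r^{\ord},\Lambda_r),\Lambda_r)\}$ is canonically isomorphic to $\{M_r^{\ord}\}$. In particular it inherits the control property of Theorem~\ref{Hida}, so the arguments of Lemma~\ref{cano-2} and Proposition~\ref{Equality1} apply verbatim to $\mfW^{*}$ and give
\begin{equation*}
  \Hom_\Lambda(\mfW^{*},\Lambda)/\mfa_r \;\cong\; \Hom_{\Lambda_r}\!\bigl(\Hom_{\Lambda_r}(M_r^{\ord},\Lambda_r),\,\Lambda_r\bigr)\;\cong\; M_r^{\ord}\;=\;\mfW^{\ord}/\mfa_r,
\end{equation*}
the last equality by Theorem~\ref{mainthmone}. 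By naturality of the biduality isomorphism, the reduction of $\mathrm{ev}$ modulo $\mfa_r$ is the honest $\Lambda_r$-biduality map of $M_r^{\ord}$, which is an isomorphism by reflexivity. As $\mfW^{\ord}$ and $\Hom_\Lambda(\mfW^{*},\Lambda)$ are finitely generated and complete for the $\mfm$-adic topology, a homomorphism that is an isomorphism modulo every $\mfa_r$ is an isomorphism; hence $\mfW^{\ord}$ is reflexive and therefore $\Lambda$-free.

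I expect the main obstacle to be the third step: re-running the control machinery for the dualized system and, above all, verifying the naturality that identifies $\mathrm{ev}\bmod\mfa_r$ with the biduality map of $M_r^{\ord}$. Each isomorphism invoked — those of Lemma~\ref{cano-2}, Proposition~\ref{Equality1}, and Lemma~\ref{Equality2} — must be shown to be compatible with the transition maps $M_r\to M_s$ and with evaluation, so that passing to the limit introduces no discrepancy. The exactness of the ordinary-part functor and its commutation with $\Z_p$-duality, both recorded earlier, are precisely what make these compatibilities hold.
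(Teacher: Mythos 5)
Your proposal is correct and follows essentially the same route as the paper: reduce freeness to reflexivity over $\Lambda$, establish Proposition~\ref{Equality1} via Lemmas~\ref{MainLemma}, \ref{cano-2} and Proposition~\ref{Keylemma}, and then compute $\Hom_\Lambda(\Hom_\Lambda(\mfW^{\ord},\Lambda),\Lambda)\cong\varprojlim_r \Hom_{\Lambda_r}(\Hom_{\Lambda_r}(M_r^{\ord},\Lambda_r),\Lambda_r)\cong\varprojlim_r M_r^{\ord}=\mfW^{\ord}$ using Lemma~\ref{Equality2} levelwise. Your extra care about the naturality of the evaluation map (so that the abstract isomorphism with the double dual is in fact the biduality map) is a point the paper leaves implicit, but it is the same argument.
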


\begin{proof}
Since any finitely generated reflexive $\Lambda$-module is free, it suffices to show that 
$\mfW^{\ord}$ is a reflexive $\Lambda$-module. By Proposition~\ref{Equality1} and Lemma~\ref{Equality2}, we have:
\begin{equation*}
\begin{split}
\Hom_{\Lambda}(\Hom_{\Lambda}(\mfW^{\ord}, \Lambda), \Lambda) &= \varprojlim_r \Hom_{\Lambda}(\Hom_{\Lambda_r}(\mfW^{\ord}, \Lambda)/\mfa_{r}, \Lambda_r)\\
  &= \varprojlim_r\Hom_{\Lambda_r}(\Hom_{\Lambda_r}((\Gamma_1(Np^r)^{\ab}\otimes \Z_p)
    ^{\ord}, \Lambda_r),\Lambda_r)\\
  &= \varprojlim_r\  (\Gamma_1(Np^r)^{\ab}\otimes \Z_p)^{\ord} = \mfW^{\ord}.
\end{split}
\end{equation*}
\end{proof}
\section{Acknowledgements}
%The results presented in this work date back to 2009. 
The author thanks Prof. M. Emerton for his encouragement to work out the details in~\cite{Eme99} for the prime $p=2$. These results turned out be quite useful in~\cite{GK12}.
\bibliographystyle{plain, abbrv}

\begin{thebibliography}{ab99}
\bibitem{Eme99} 
        Emerton, Matthew. 
        A new proof of a theorem of Hida. 
        {\it Internat. Math. Res. Notices} 1999, no. 9, 453--472.

\bibitem{GK12}
        Ghate, Eknath; Kumar, Narasimha.
        Control theorems for ordinary $2$-adic families of modular forms. 
        To appear in the {\it Proceedings of the International Colloquium on Automorphic Representations and $L$-functions}, TIFR, 2012.  

\bibitem{Hid86}
         Hida, Haruzo. 
         Galois representations into $\GL_2(\Z_p[[X]])$ attached to ordinary cusp forms. 
         {\it Invent. Math.} 85 (1986), no. 3, 545--613.

\bibitem{Hid88}
        Hida, Haruzo.
        On $p$-adic Hecke algebras for $\GL_2$ over totally real fields.
        {\it Ann. of Math.} (2) 128 (1988), no. 2, 295--384. 

\bibitem{NSW08}
        Neukirch, J\"urgen; Schmidt, Alexander; Wingberg, Kay. 
        {\it Cohomology of number fields}. Second edition. 
        Grundlehren der Mathematischen Wissenschaften [Fundamental Principles of Mathematical Sciences], 323. 
        Springer-Verlag, Berlin, 2008. 
\end{thebibliography}

% \begin{thebibliography}{0}
% 
% \bibitem{1} A. Alekseev, A. Malkin and E. Meinrenken, Lie group valued
% moment maps, {\it J. Differential Geom.} {\bf 48}(3) (1998) 445-495.
% 
% \bibitem{2} F. Bouchut, On zero pressure gas dynamics, in 
% {\it Advances in Kinetic Theory and Computing}, ed.~B. Perthame, 
% Ser. Adv. Math. Appl. Sci., Vol. 22 (World Scientific, 1994),
% pp.~171--190.
% 
% \bibitem{3} S. K. Godunov and E. Romenskii, {\it Elements of Continuum 
% Mechanics and Conservation Laws} (Kluwer Academic/Plenum Publishers,
% 2003).
% 
% \bibitem{4} J. Li and T. Zhang, On the initial-value problem for
% zero-pressure gas dynamics, in {\it Proc. 7th Intl. Conf. on Hyperbolic
% Problems}, ed. R. Jeltsch (Birkhauser Verlag, 1998), pp.~629--640.
% 
% \end{thebibliography}

\end{document}